

\documentclass[11pt]{article} 

\usepackage[utf8]{inputenc} 


\usepackage{geometry} 
\geometry{a4paper} 


\usepackage{booktabs} 
\usepackage{array} 
\usepackage{paralist} 
\usepackage{verbatim} 
\usepackage{subfig} 
\usepackage{amsmath}
\usepackage{amsfonts}
\usepackage{amssymb}
\usepackage{graphicx} 

\usepackage{float}
\floatstyle{boxed} 
\restylefloat{figure}

\usepackage{fancyhdr} 
\pagestyle{fancy} 
\lhead{}\chead{}\rhead{}
\lfoot{}\cfoot{\thepage}\rfoot{}

\usepackage{sectsty}
\allsectionsfont{\sffamily\mdseries\upshape} 

\usepackage[nottoc,notlof,notlot]{tocbibind} 
\usepackage[titles,subfigure]{tocloft} 


\newtheorem{theorem}{Theorem}[section]
\newtheorem{lemma}[theorem]{Lemma}

\newtheorem{corollary}[theorem]{Corollary}
\newtheorem{conjecture}[theorem]{Conjecture}

\newenvironment{proof}[1][Proof]{\begin{trivlist}
\item[\hskip \labelsep {\bfseries #1}]}{\end{trivlist}}
\newenvironment{definition}[1][Definition]{\begin{trivlist}
\item[\hskip \labelsep {\bfseries #1}]}{\end{trivlist}}

\newcommand{\qed}{\nobreak \ifvmode \relax \else
\ifdim\lastskip<1.5em \hskip-\lastskip
\hskip1.5em plus0em minus0.5em \fi \nobreak
\vrule height0.75em width0.5em depth0.25em\fi}



\title{A Conjecture concerning the Fibonomial Triangle}
\author{Jeremiah Southwick}

\begin{document}
\maketitle

\begin{abstract}
The fibonomial triangle has been shown by Chen and Sagan to have a fractal nature mod 2 and 3. Both these primes have the property that the Fibonacci entry point of $p$ is $p+1$. We study the fibonomial triangle mod 5, showing with a theorem of Knuth and Wilf that the triangle has a recurring structure under divisibility by five. While this result is not new, our method of proof is new and suggests a conjecture for the divisibility of a fibonomial coefficient by a general prime $p$. We give necessary conditions for such primes, namely that the Fibonacci entry point must be greater than or equal to $p$, and offer numerical evidence for the validity of the conjecture. Lastly, we conclude with a discussion concerning further directions of research.
\end{abstract}

\section{Introduction}
Pascal's triangle is constructed from binomial coefficients ${n \choose k} = \frac{n!}{k!(n-k)!}$, where ${n \choose k}$ is the $k^{th}$ entry of the $n^{th}$ row (See Figure \ref{PasTri}). It is well-known that Pascal's triangle exhibits a fractal nature mod $p$ where $p$ is a prime. For example, in the mod 2 case, the triangle follows the pattern of Sierpinski's fractal, the fractal which begins with a triangle and removes the center quarter, doing the same for each remaining upright triangle at each iteration, as in Figure \ref{SierIter}.

\begin{figure}
\caption{Pascal's Triangle}
\label{PasTri}
\begin{center}
$
\begin{array}{ccccccccccccccc}
& & & & & & & 1 & & & & & & & \\
& & & & & & 1 & & 1 & & & & & & \\
& & & & & 1 & & 2 & & 1 & & & & & \\
& & & & 1 & & 3 & & 3 & & 1 & & & & \\
& & & 1& & 4 & & 6 & & 4 & & 1 & & & \\
& & 1 & & 5 & & 10 & & 10 & & 5 & & 1 & & \\
& 1 & & 6 & & 15 & & 20 & & 15 & & 6 & & 1 & \\
1 & & 7 & & 21 & & 35 & & 35 & & 21 & & 7 & & 1 \\
\end{array}
$
\end{center}
\end{figure}

\begin{figure}
\caption{Sierpinski's Fractal}
\label{SierIter}
\begin{center}
\includegraphics[scale=0.5]{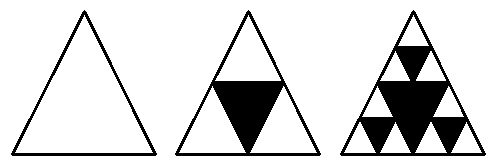}
\end{center}
\end{figure}

One can correlate each iteration of Sierpinski's fractal to the first $2^n$ rows of Pascal's triangle mod 2, where the ``removed'' portion is all the zeros mod 2, that is, all the even numbers. The first 8 rows of this triangle, corresponding to the second iteration of Sierpinski's fractal, are shown in Figure \ref{Pascal2}.

\begin{figure}
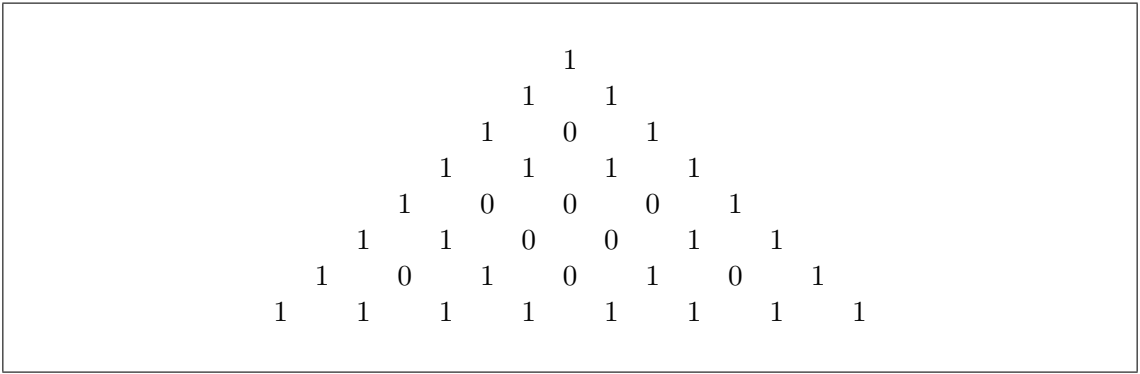

\caption{Pascal's Triangle mod 2}
\label{Pascal2}
\begin{center}
$
\begin{array}{ccccccccccccccc}
& & & & & & & 1 & & & & & & & \\
& & & & & & 1 & & 1 & & & & & & \\
& & & & & 1 & & 0 & & 1 & & & & & \\
& & & & 1 & & 1 & & 1 & & 1 & & & & \\
& & & 1& & 0 & & 0 & & 0 & & 1 & & & \\
& & 1 & & 1 & & 0 & & 0 & & 1 & & 1 & & \\
& 1 & & 0 & & 1 & & 0 & & 1 & & 0 & & 1 & \\
1 & & 1 & & 1 & & 1 & & 1 & & 1 & & 1 & & 1 \\
\end{array}
$
\end{center}
\end{figure}

Beyond $p = 2$, the exact fractal produced mod $p$ is different than Sierpinski's, but exhibits the same repetitive structure. This structure can be described mathematically by expressing $n$ and $k$ in base $p$. We will use $(n)_p$ and $(k)_p$ to notate this.

\begin{definition}
Let $n = n_0 + n_1 \cdot p + n_2 \cdot p^2 + \cdots$ where $0 \leq n_i < p$ for all $i$. Then we say $(n)_p = (n_0 \ n_1 \ n_2 \ \cdots)_p$.
\end{definition}

When the context is unclear, we may use the subscript $p$ on either $n$ or its expansion. For example, when $p = 7$, we write $109 = (4)1 + (1)7 + (2)49 = (4 \ 1 \ 2)_7$. The pertinent result is a theorem of Lucas:

\begin{theorem}[Lucas]
\label{lucas}
Let $p$ be a prime and $(n)_p = (n_i)_{i \geq 0}, (k)_p = (k_i)_{i \geq 0}$. Then

\begin{center}
${n \choose k} \equiv_p {n_0 \choose k_0}{n_1 \choose k_1}{n_2 \choose k_2} \cdots$
\end{center}
\end{theorem}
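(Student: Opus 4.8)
The plan is to prove the congruence by passing to the polynomial ring $\mathbb{F}_p[x]$ and comparing the two sides coefficient by coefficient. The entire argument rests on a single algebraic identity, the \emph{freshman's dream}, which I would establish first: for a prime $p$ one has $(1+x)^p \equiv 1 + x^p \pmod p$. This holds because every intermediate binomial coefficient ${p \choose j}$ with $0 < j < p$ is divisible by $p$. Indeed, writing ${p \choose j} = \frac{p!}{j!(p-j)!}$, the prime $p$ divides the numerator $p!$ but, being prime and strictly larger than both $j$ and $p-j$, divides neither factor of the denominator; hence $p \mid {p \choose j}$ and all the middle terms of the binomial expansion vanish mod $p$.

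Next I would bootstrap this identity up to prime-power exponents. By an easy induction on $i$, raising both sides of $(1+x)^p \equiv 1 + x^p$ to successive $p$-th powers gives $(1+x)^{p^i} \equiv 1 + x^{p^i} \pmod p$ for every $i \geq 0$. With these congruences in hand, I would expand $(1+x)^n$ according to the base-$p$ digits of $n$: since $n = \sum_{i} n_i p^i$,
\[
(1+x)^n = \prod_{i \geq 0} \left( (1+x)^{p^i} \right)^{n_i} \equiv \prod_{i \geq 0} \left( 1 + x^{p^i} \right)^{n_i} \pmod p.
\]

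The final and most delicate step is to extract the coefficient of $x^k$ from each side. On the left it is exactly ${n \choose k}$. On the right, expanding each factor as $(1+x^{p^i})^{n_i} = \sum_{j_i = 0}^{n_i} {n_i \choose j_i} x^{j_i p^i}$ shows that a monomial $x^k$ arises from any selection of digits $j_i$ whose total exponent satisfies $\sum_i j_i p^i = k$, contributing $\prod_i {n_i \choose j_i}$. Here is where the argument hinges: because each $n_i \leq p-1$, every chosen $j_i$ lies in $\{0, 1, \ldots, p-1\}$, so $\sum_i j_i p^i$ is a genuine base-$p$ expansion. By uniqueness of that expansion, the only contributing selection is $j_i = k_i$ for all $i$, and the matching coefficient is $\prod_i {n_i \choose k_i}$, which yields the claimed congruence. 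I expect this uniqueness-of-digits bookkeeping to be the main obstacle to state cleanly, together with the degenerate cases in which some $k_i > n_i$; in those cases no valid selection exists so the left side is $\equiv 0$, which is consistent since the convention ${n_i \choose k_i} = 0$ makes the right side vanish as well.
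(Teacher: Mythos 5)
Your proof is correct: the freshman's dream $(1+x)^p \equiv 1+x^p \pmod p$, the bootstrap to $(1+x)^{p^i} \equiv 1+x^{p^i}$, the factorization of $(1+x)^n$ along the base-$p$ digits of $n$, and the extraction of the coefficient of $x^k$ via uniqueness of base-$p$ representations (with the degenerate case $k_i > n_i$ handled by the convention ${n_i \choose k_i} = 0$) together constitute a complete argument. Note, however, that the paper itself offers no proof of this theorem: it reproduces the statement from Chen and Sagan and defers the proof to Fine's 1947 article, so there is no in-paper argument to compare against. The generating-function proof you give is the standard one (and is essentially the argument found in the cited reference), so nothing is lost; the only point worth stating explicitly in a polished write-up is that all products and sums are finite because $n$ and $k$ have only finitely many nonzero digits, and that the uniqueness step genuinely requires the observation you made, namely that the constraint $j_i \leq n_i \leq p-1$ is what forces $(j_i)$ to be \emph{the} base-$p$ expansion of $k$ rather than some other representation.
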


We have reproduced Theorem \ref{lucas} as it appears in \cite{CheSag14}. A proof of the theorem can be found in \cite{Fin47}. What the theorem essentially means is that the exact residue class of any binomial coefficient mod $p$ can be determined by considering only binomials in the first $p$ rows of Pascal's triangle, since the $n_i$ are between 0 and $p-1$.

Binomial coefficients can be generalized to produce other triangles, which may also be studied mod $p$. In this paper we will specifically study the fibonomial triangle, which is constructed from the Fibonacci sequence.

Let $F_1 = 1, F_2 = 1$ and $F_n = F_{n-1} + F_{n-2}$ for $n \geq 3$, so that $(F_n)$ is the Fibonacci sequence $(1, 1, 2, 3, 5, 8, 13, 21, \cdots).$ There are many combinatorial interpretations of these numbers; for example, $F_n$ counts the number of ways to tile a row of $n-1$ tiles with dominos and squares (see \cite{BenQui03}). We will use this result in a later proof. The sequence also appears in various places throughout nature, such as in pinecone spirals and the unencumbered growth of rabbit populations. We define a generalization of factorials in terms of $(F_n)$, appropriately named fibotorials. We follow \cite{CheSag14} in our notation by denoting this number as $n!_F$.

\begin{definition}
For $n > 0$, let $n!_F = F_n \cdot F_{n-1} \cdots F_1$. We define $0!_F$ to be 1.
\end{definition}

For example, $6!_F = 8 \cdot 5 \cdot 3 \cdot 2 \cdot 1 \cdot 1 = 240$. Having defined a generalization of factorials, we can now give the definition of fibonomial coefficients. These are defined identically to binomial coefficients, except with fibotorials instead of factorials:

\begin{definition}
The fibonomial coefficient ${n \choose k}_F = \frac{n!_F}{k!_F(n-k)!_F}$ for $0 \leq k \leq n$. Otherwise, ${n \choose k}_F = 0$.
\end{definition}

Thus ${5 \choose 2}_F = \frac{5 \cdot 3 \cdot 2 \cdot 1 \cdot 1}{(2 \cdot 1 \cdot 1)(1 \cdot 1)} = 15$ and ${6 \choose 3}_F = \frac{240}{(2 \cdot 1 \cdot 1)(2 \cdot 1 \cdot 1)} = 60$. It would be reasonable to suspect that ${n \choose k}_F$ has a combinatorial interpretation, and in fact it does. Given a $k\times (n-k)$ board with a lattice path $L$ running from the lower left corner to the upper right corner, ${n \choose k}_F$ counts the number of ways over all possible paths $L$ to tile the rows above $L$ with dominos and squares while tiling the columns below $L$ with tilings of dominos and squares, with the added proviso that each column must have a domino in its first two (lowest) tiles. A proof of this fact is given in \cite{SagSav10}. Thus ${n \choose k}_F$ is an integer for all $n, k$.

Fibonomial coefficients satisfy an analogue of the binomial relation ${n \choose k} = {n-1 \choose k} + {n-1 \choose k-1}$, in that ${n \choose k}_F = F_{k+1}{n-1 \choose k}_F + F_{n-k-1}{n-1 \choose k-1}_F$. Benjamin and Reiland \cite{BenRei14} offer a combinatorial proof of this fact by breaking the tilings into two disjoint sets, the first being tilings of lattice paths ending with a vertical step, of which there are $F_{k+1}{n-1 \choose k}_F$, and the second being tilings of lattice paths ending with a horizontal step, of which there are $F_{n-k-1}{n-1 \choose k-1}_F$.

Using this recurrence or the definition, one can generate the fibonomial triangle. We have reproduced the first 8 rows in Figure \ref{FibTri}. Given this triangle, we would like to derive conditions similar to Theorem \ref{lucas} for when a prime $p$ divides ${n \choose k}_F$. Such a coefficient exists, since there is a first Fibonacci number which $p$ divides. We follow \cite{CheSag14} in our notation of this number.

\begin{figure}
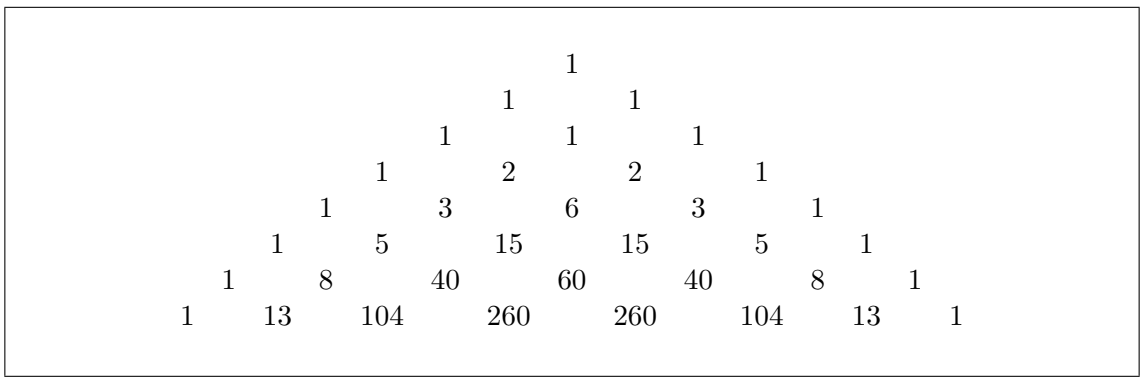

\caption{The Fibonomial Triangle}
\label{FibTri}
\begin{center}
$
\begin{array}{ccccccccccccccc}
& & & & & & & 1 & & & & & & & \\
& & & & & & 1 & & 1 & & & & & & \\
& & & & & 1 & & 1 & & 1 & & & & & \\
& & & & 1 & & 2 & & 2 & & 1 & & & & \\
& & & 1& & 3 & & 6 & & 3 & & 1 & & & \\
& & 1 & & 5 & & 15 & & 15 & & 5 & & 1 & & \\
& 1 & & 8 & & 40 & & 60 & & 40 & & 8 & & 1 & \\
1 & & 13 & & 104 & & 260 & & 260 & & 104 & & 13 & & 1 \\
\end{array}
$
\end{center}
\end{figure}

\begin{definition}
Given a prime number $p$, we define $p^*$ to be the least positive integer $n$ such that $p \mid F_n$.
\end{definition}

For example, $11^* = 10$, since $F_{10} = 55$ is the first Fibonacci number which 11 divides. There are several different conventions for naming $p^*$. In \cite{KnuWil89} it is referred to as $r(p)$, \textit{the rank of apparition of $p$}. In \cite{CubRou14} it is referred to as $Z(p)$, \textit{the Fibonacci entry point}. Throughout the paper we will simply use $p^*$.

The existence of $p^*$ gives a new base to consider when using the Fibonacci numbers.

\begin{definition}
Let $\mathcal{F}_p = (1, p^*, p^*p, p^*p^2, \cdots)$.
\end{definition}

$\mathcal{F}_p$ has all the usual properties of a base, since $1 \mid p^*$ and $p^*p^i \mid p^*p^{i+1}$ for all $i = 0, 1, \cdots$. Thus every number $n$ can be written uniquely as $n = n_0(1) + n_1(p^*) + n_2(p^*p) + \cdots = (n_0 \ n_1 \ n_2 \ \cdots)_{\mathcal{F}_p}$ where $0 \leq n_0 < p^*$ and $0 \leq n_i < p$ for all other $n_i$. This is a natural base to use for the Fibonacci numbers, since $p^* = (0 \ 1)_{\mathcal{F}_p}$, just as in base $p$ we have $p = (0 \ 1)_p$.

One might wonder naively whether this base could be used to show a similar congruence to Theorem \ref{lucas}, namely whether the congruence

\begin{center}
${n \choose k}_F \equiv_p {n_0 \choose k_0}_F {n_1 \choose k_1}_F {n_2 \choose k_2}_F \cdots$
\end{center}

\noindent holds for all primes $p$. But this is not the case. As shown in \cite{CheSag14}, the congruence holds when $p = 2$, but for $p = 3$ the relationship only maintains divisibility by 3. In fact, we will show that whenever $p^* < p$, the relationship does not even preserve divisibility by $p$. In contrast to these cases, if $p^* \geq p$, it appears that divisibility by $p$ is maintained. We thus introduce the following conjecture:

\begin{conjecture}
\label{OpenProb}
Let $p^* \geq p$ and let $\mathcal{F}_p$ be as above. Express $(n)_{\mathcal{F}_p} = (n_i)_{i\geq0}$ and $(k)_{\mathcal{F}_p} = (k_i)_{i\geq0}$. Then

\begin{center}
$p \mid {n \choose k}_F \Leftrightarrow p \mid {n_0 \choose k_0}_F {n_1 \choose k_1}_F {n_2 \choose k_2}_F \cdots$.
\end{center}
\end{conjecture}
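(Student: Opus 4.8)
The plan is to reduce both sides of the claimed equivalence to a single combinatorial condition on the base-$\mathcal{F}_p$ digits, namely the existence of an index $i$ with $k_i > n_i$. First I would pass from divisibility to the $p$-adic valuation: writing $\nu_p(\cdot)$ for the exponent of $p$ in an integer, the left-hand side asserts $\nu_p\!\left({n \choose k}_F\right) \ge 1$, while the right-hand side asserts that at least one factor ${n_i \choose k_i}_F$ is divisible by $p$. Since ${n_i \choose k_i}_F = 0$ (hence divisible by $p$) exactly when $k_i > n_i$, the whole theorem follows once I establish (a) that $\nu_p\!\left({n \choose k}_F\right) \ge 1$ if and only if some $k_i > n_i$, and (b) that when $k_i \le n_i$ the digit-fibonomial ${n_i \choose k_i}_F$ is a unit mod $p$. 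Together these force the right-hand product to be divisible by $p$ precisely when some digit violation $k_i > n_i$ occurs.

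For (a) I would compute $\nu_p\!\left({n \choose k}_F\right)$ explicitly. The key input is the valuation of the Fibonacci numbers: $\nu_p(F_m) = 0$ unless $p^* \mid m$, while for $p$ odd the classical lifting-the-exponent identity $\nu_p(F_{p^* t}) = \nu_p(F_{p^*}) + \nu_p(t)$ holds. Setting $e = \nu_p(F_{p^*})$ and $M_n = \lfloor n/p^* \rfloor$, summing over the multiples of $p^*$ not exceeding $n$ gives $\nu_p(n!_F) = e M_n + \nu_p(M_n!)$. The clean observation here is that the base-$\mathcal{F}_p$ digits of $n$ above the zeroth are exactly the base-$p$ digits of $M_n$. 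Subtracting the analogous expressions for $k$ and $n-k$ and applying Legendre's formula (equivalently Kummer's theorem) to $M_n!$, $M_k!$, $M_{n-k}!$, I expect the carry formula $\nu_p\!\left({n \choose k}_F\right) = e\,c_0 + \sum_{i \ge 1} c_i$, where $c_i$ is the $i$-th carry produced when $k$ and $n-k$ are added in the mixed-radix base $\mathcal{F}_p$. This is exactly the carry-counting of Knuth and Wilf specialized to the Fibonacci sequence.

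Since $e \ge 1$, this valuation is positive precisely when some carry occurs, so (a) reduces to the statement that adding $k$ and $n-k$ in base $\mathcal{F}_p$ produces a carry if and only if $k_i > n_i$ for some $i$. I would prove this from uniqueness of the base-$\mathcal{F}_p$ representation: if $k_i \le n_i$ for every $i$, then $(n_i - k_i)_i$ is a legal digit string summing to $n-k$, so by uniqueness it is the representation of $n-k$ and the column-by-column addition $k_i + (n-k)_i = n_i$ never carries; conversely a carry-free addition forces $k_i + (n-k)_i = n_i$, whence $k_i \le n_i$ throughout. For (b) the hypothesis $p^* \ge p$ enters decisively: each higher digit satisfies $n_i < p \le p^*$, so $n_i$ occupies only the zeroth $\mathcal{F}_p$-digit and the addition $k_i + (n_i - k_i) = n_i < p^*$ carries nowhere, and the zeroth digit is handled the same way since $n_0 < p^*$. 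Applying the formula of step (a) to this single-digit fibonomial then yields $\nu_p\!\left({n_i \choose k_i}_F\right) = 0$, which is (b).

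The main obstacle I anticipate is establishing and correctly applying the valuation identity for $F_m$, since it is the engine behind the carry formula; the lifting-the-exponent step must be justified uniformly in $p$ and the bookkeeping of $e = \nu_p(F_{p^*})$ kept exact. The prime $p = 2$ genuinely falls outside this framework, as $\nu_2(F_m)$ does not obey $\nu_2(F_{3t}) = 1 + \nu_2(t)$, so that case (already covered by Chen and Sagan) must be treated separately. Finally, it is worth recording why $p^* \ge p$ is not merely convenient but \emph{necessary}: when $p^* < p$ a higher digit $n_i$ can exceed $p^*$, so ${n_i \choose k_i}_F$ may be divisible by $p$ even though $k_i \le n_i$, which breaks (b) and hence the equivalence, exactly matching the paper's assertion that divisibility fails in that regime.
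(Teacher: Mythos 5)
The statement you are proving is left as an \emph{open conjecture} in the paper: the author establishes it only for $p=5$ (Lemma \ref{WeightLift}), cites Chen and Sagan for $p=2,3$, and offers numerical evidence for $p=7,23,43,67,83$. Your argument is the natural generalization of the paper's own $p=5$ proof, and as far as I can tell it is sound. Theorem \ref{KnuWil} gives $\nu_p\bigl({n \choose k}_F\bigr)=\sum_{i\ge 1}c_i+\nu_p(F_{p^*})\,c_0$, where the $c_i$ are the carries in the base-$\mathcal{F}_p$ addition of $k$ and $n-k$; since $\nu_p(F_{p^*})\ge 1$, divisibility by $p$ is equivalent to the existence of a carry, which by your uniqueness argument is equivalent to $k_i>n_i$ for some $i$; and the hypothesis $p^*\ge p$ guarantees that every digit fibonomial with $k_i\le n_i$ is built from Fibonacci numbers of index below $p^*$ and hence is a unit mod $p$. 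The paper's $p=5$ proof does exactly this with $\mathcal{F}_5$ collapsing to base $5$, an explicit ten-column table in place of your general carry criterion, and inspection of the $25$ single-digit fibonomials in place of your index-below-$p^*$ observation. Two points deserve care in a full write-up. First, the translation between Knuth--Wilf's carries (to the left of, and across, the radix point when $k/p^*$ and $(n-k)/p^*$ are added in $p$-ary notation) and your mixed-radix carries must be made explicit; in particular the carry across the radix point is exactly the condition $k_0+(n-k)_0\ge p^*$, i.e.\ your $c_0$. Second, you need not re-derive the carry formula from the lifting-the-exponent identity $\nu_p(F_{p^*t})=\nu_p(F_{p^*})+\nu_p(t)$ --- citing Theorem \ref{KnuWil} suffices and avoids having to justify that identity, which requires rank-of-apparition machinery beyond Lemma \ref{valinc}; if you keep the derivation, that identity needs its own proof. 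With those details filled in, your argument upgrades Conjecture \ref{OpenProb} to a theorem for all odd primes with $p^*\ge p$, the case $p=2$ being covered separately by Chen and Sagan.
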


In fact, since $F_3 = 2$ and $F_4 = 3$, we have that $2^* = 3$ and $3^* = 4$ (so $2^* > 2$ and $3^* > 3$), meaning Conjecture \ref{OpenProb} has already been shown for the first two cases in \cite{CheSag14}.

The rest of the paper will be devoted to studying the validity of Conjecture \ref{OpenProb}. Section 2 will detail the important results to consider in the literature, as well as the techniques that will aid us in our endeavors. In Section 3 we will show that Conjecture \ref{OpenProb} is true when $p = 5$, and will prove an immediate corrollary. Section 4 will prove the necessary conditions for Conjecture \ref{OpenProb}, namely that it fails when $p^* < p$. Section 5 concludes with a discussion of possible directions for future research.

\section{Useful Results in the Literature}

\subsection{The fibonomial triangle mod 2 and 3}

Chen and Sagan \cite{CheSag14} have studied the fractal nature of the fibonomial triangle mod 2 and 3, deriving the exact residue class of ${n \choose k}_F$ in each case for all $n$ and $k$. Their main theorem in the mod 2 case is as follows:

\begin{theorem}[Chen and Sagan \cite{CheSag14}]
\label{CheSagMain}
Given $m \geq 0$ and $0 \leq n, k < 3 \cdot 2^m$ we have $${n + 3 \cdot 2^m \choose k}_F \equiv_2 {n \choose k}_F.$$
\end{theorem}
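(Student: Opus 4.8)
The plan is to reduce the claimed congruence to a statement about parity and then to the $2$-adic valuation $\nu_2$. Since two integers are congruent mod $2$ exactly when they have the same parity, and a positive integer is odd exactly when its $2$-adic valuation is $0$, the theorem is equivalent to the assertion that, for $0 \le n,k < 3\cdot 2^m$, the coefficient $\binom{n}{k}_F$ is odd if and only if $\binom{n+3\cdot 2^m}{k}_F$ is odd. Writing the fibonomial as the product $\binom{N}{k}_F = \prod_{i=1}^{k} F_{N-k+i}/F_i$, which follows directly from the definition, gives $\nu_2\binom{N}{k}_F = \sum_{i=1}^k \nu_2(F_{N-k+i}) - \sum_{i=1}^k \nu_2(F_i)$. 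The relevant arithmetic fact is that $2 \mid F_j$ iff $3 \mid j$; that is, $2^* = 3$, so evenness of fibonomials is controlled by the multiples of $3$, and the natural bookkeeping base is $\mathcal{F}_2 = (1,3,6,12,\dots)$.

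The crux is a Lucas/Kummer-type description of the odd entries, which I would isolate as a lemma: $\binom{n}{k}_F$ is odd if and only if $k_i \le n_i$ for every $i$, where $(n)_{\mathcal{F}_2} = (n_i)$ and $(k)_{\mathcal{F}_2} = (k_i)$ (equivalently, adding $k$ and $n-k$ produces no carries in base $\mathcal{F}_2$). I would prove this by induction on $n$ using the fibonomial recurrence reduced mod $2$, namely $\binom{n}{k}_F \equiv_2 [\,3 \nmid k{+}1\,]\binom{n-1}{k}_F + [\,3\nmid n{-}k{-}1\,]\binom{n-1}{k-1}_F$, tracking how the digits of $n$ in base $\mathcal{F}_2$ change as $n$ increases by $1$. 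Alternatively one can simply invoke the $p=2$ Lucas analogue quoted in the introduction, noting that each digit factor $\binom{n_i}{k_i}_F$ equals $1$ when $k_i \le n_i$ and $0$ otherwise; for $i = 0$ the admissible digits satisfy $n_0,k_0 \le 2$, and every such fibonomial with $k_0 \le n_0$ equals $1$.

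Granting the lemma, the theorem follows from a digit comparison. The place value $3\cdot 2^m$ sits in position $m+1$ of the base $\mathcal{F}_2$, so the hypothesis $n,k < 3\cdot 2^m$ says precisely that $n$ and $k$ have all digits in positions $\ge m+1$ equal to $0$. Adding $3\cdot 2^m$ therefore changes only the position-$(m{+}1)$ digit of $n$, turning it from $0$ to $1$ with no carry, while leaving every lower digit unchanged; and $k$ still has a $0$ in that position. Hence the digitwise domination $k_i \le n_i$ holds for the pair $(n+3\cdot 2^m,\,k)$ if and only if it holds for $(n,k)$, so the two coefficients have the same parity, which is the claim.

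The main obstacle is the lemma, and the reason it is delicate is the irregular $2$-adic valuation of the Fibonacci numbers: the lifting-the-exponent pattern breaks at $p=2$ (for instance $\nu_2(F_6) = 3$ rather than $2$), so the total valuation of a numerator window is genuinely sensitive to the positions of the multiples of $6,12,\dots$, not merely to the count of multiples of $3$. This is exactly why one cannot prove the statement by naively counting multiples of $3$ in the two windows, and why passing through the mixed-radix base $\mathcal{F}_2$ (or the full $p=2$ Lucas analogue) — which packages all of these higher-order corrections into the single condition $k_i \le n_i$ — is the right level of abstraction. It is also the reason the argument is special to $p=2$ and does not extend verbatim to other primes.
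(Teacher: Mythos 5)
The paper does not actually prove Theorem \ref{CheSagMain}: it is quoted from Chen and Sagan, whose primary argument (as the paper describes) is a combinatorial involution on the tilings counted by ${n \choose k}_F$, with number-theoretic and inductive proofs given as secondary methods. Your route is different and is correct: you derive the periodicity as a corollary of the Lucas-type digit theorem in the base $\mathcal{F}_2=(1,3,6,12,\dots)$, namely that ${n \choose k}_F$ is odd if and only if $k_i\le n_i$ for every digit position $i$. That is exactly the content of the second Chen--Sagan theorem quoted in Section 2.1, since each admissible digit fibonomial ${n_i \choose k_i}_F$ with $k_i\le n_i$ equals $1$ (for $i=0$ the digits run over $0,1,2$ and the first three rows of the fibonomial triangle are all $1$'s), so the product is $0$ or $1$ according to whether domination fails or holds. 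Your digit bookkeeping is also right: $3\cdot 2^m$ is the place value of position $m+1$, the hypothesis $n,k<3\cdot 2^m$ forces all digits in positions $\ge m+1$ to vanish, and adding $3\cdot 2^m$ flips only that one digit of $n$ from $0$ to $1$ with no carry, leaving the domination condition unchanged; the degenerate case $k>n$, where ${n \choose k}_F=0$ by convention, is consistent because domination then fails at some position $\le m$. Since the Lucas analogue is (per the paper) proved via Knuth--Wilf and not via the periodicity statement, there is no circularity in this deduction. What your approach buys is brevity and a structural explanation of the period $3\cdot 2^m$; what it costs is self-containedness: if you decline to cite the Lucas analogue, the inductive proof of your key lemma from the recurrence ${n \choose k}_F=F_{k+1}{n-1 \choose k}_F+F_{n-k-1}{n-1 \choose k-1}_F$ is only sketched, and the mixed-radix carry-tracking there is the real work. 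Your closing observation that $\nu_2(F_{3\cdot 2^j})$ grows irregularly (e.g.\ $\nu_2(F_6)=3$) correctly identifies why a naive count of multiples of $3$ in the numerator and denominator cannot replace the digit condition.
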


What this essentially means is that the fibonomial triangle mod 2 is quite similar to Pascal's triangle mod 2, in that it correlates to Sierpinski's fractal, except with an initial triangle of height three, rather than height 2. See figure \ref{FibTri2}. Chen and Sagan's primary method of proof is combinatorial. Using the tiling interpretation of ${n \choose k}_F$, they pair tilings together by means of an involution, and determine the residue class based on whether or not there is an unpaired tiling left over. As secondary methods, they also show Theorem \ref{CheSagMain} from a number theoretic perspective, and then with an inductive argument.

\begin{figure}
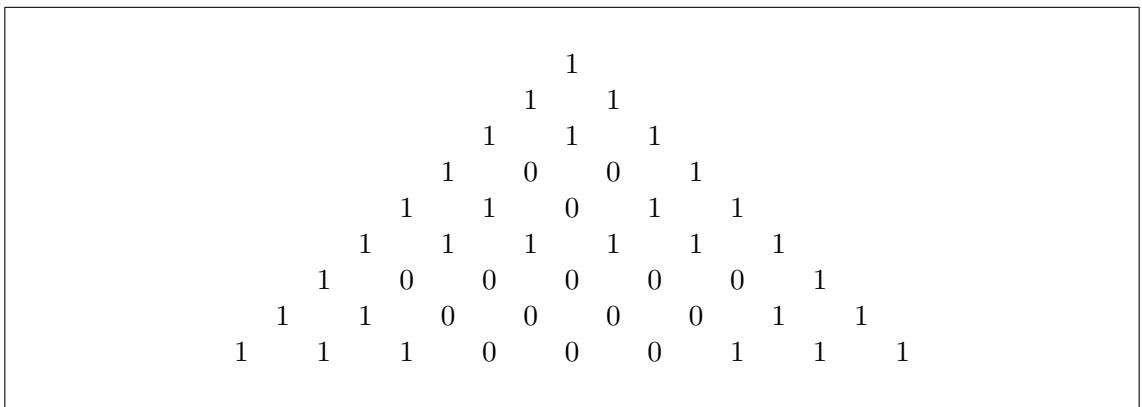

\caption{The Fibonomial Triangle mod 2}
\label{FibTri2}
\begin{center}
$
\begin{array}{ccccccccccccccccc}
& & & & & & & & 1 & & & & & & & & \\
& & & & & & & 1 & & 1 & & & & & & & \\
& & & & & & 1 & & 1 & & 1 & & & & & & \\
& & & & & 1 & & 0 & & 0 & & 1 & & & & & \\
& & & & 1& & 1 & & 0 & & 1 & & 1 & & & & \\
& & & 1 & & 1 & & 1 & & 1 & & 1 & & 1 & & & \\
& & 1 & & 0 & & 0 & & 0 & & 0 & & 0 & & 1 & & \\
& 1 & & 1 & & 0 & & 0 & & 0 & & 0 & & 1 & & 1 & \\
1 & & 1 & & 1 & & 0 & & 0 & & 0 & & 1 & & 1 & & 1 \\
\end{array}
$
\end{center}
\end{figure}

We will make use of methods similar to Chen and Sagan's number theoretic proof of Theorem \ref{CheSagMain}. While making this argument, they use the base $\textbf{F} = (1, 3, 3 \cdot 2, 3 \cdot 2^2, \cdots)$. Since $2^* = 3$, this is identical to the base $\mathcal{F}_2$ we defined in the introduction. Using this base, they offer the following theorem:

\begin{theorem}[Chen and Sagan \cite{CheSag14}]
Let $(n)_{\textbf{F}} = (n_i)_{i \geq 0}$ and $(k)_{\textbf{F}} = (k_i)_{i \geq 0}$. Then

\begin{center}
${n \choose k}_F \equiv_2 {n_0 \choose k_0}_F{n_1 \choose k_1}_F{n_2 \choose k_2}_F \cdots$.
\end{center}
\end{theorem}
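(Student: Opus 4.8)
The plan is to prove the base-$\mathcal{F}_2$ factorization by a double induction, mirroring the structure of the number-theoretic argument Chen and Sagan use for Theorem \ref{CheSagMain}. First I would establish a crucial base case: for $0 \leq n, k < 3$ (that is, within the first three rows of the fibonomial triangle), the single-digit fibonomials $\binom{n}{k}_F$ are exactly $1, 1, 1, 1, 1, 1$ as seen in Figure \ref{FibTri2}, and in particular every one of them is odd. This is the analogue of Lucas's reliance on the first $p$ rows of Pascal's triangle, and it is what makes the product $\binom{n_0}{k_0}_F \binom{n_1}{k_1}_F \cdots$ meaningful mod $2$: each factor is either $0$ (when $k_i > n_i$, forcing the fibonomial to be $0$) or a small fibonomial whose parity we can read off directly.

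Next I would set up the induction on the number of nonzero digits, equivalently on the size of $n$ in the base $\mathbf{F} = \mathcal{F}_2$. The inductive step should peel off the lowest digit: writing $n = n_0 + 3m$ and $k = k_0 + 3\ell$ with $0 \leq n_0, k_0 < 3$, I want to relate $\binom{n}{k}_F$ mod $2$ to the product of $\binom{n_0}{k_0}_F$ and $\binom{m}{\ell}_F$, where $m$ and $\ell$ have one fewer base-$\mathbf{F}$ digit, and then invoke the inductive hypothesis on $\binom{m}{\ell}_F$. The engine driving this reduction is Theorem \ref{CheSagMain}, which says $\binom{n + 3 \cdot 2^j}{k}_F \equiv_2 \binom{n}{k}_F$ whenever $0 \leq n, k < 3 \cdot 2^j$; applied repeatedly it lets me strip off a factor of the base at a time and collapse the high-order digits.

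The hard part will be cleanly matching the digit-by-digit structure of the base $\mathbf{F}$ to the doubling structure of Theorem \ref{CheSagMain}. The issue is that the place values $1, 3, 3\cdot 2, 3 \cdot 2^2, \dots$ are not uniform powers of a single modulus: the lowest place value is $3$ while the rest are powers of $2$ scaled by $3$. I therefore expect to need a careful bookkeeping argument, possibly an auxiliary lemma, establishing that if $n_i < k_i$ for some digit $i$ then $\binom{n}{k}_F$ is even, and conversely that if $n_i \geq k_i$ for every $i$ then $\binom{n}{k}_F$ is odd. The forward direction (an even fibonomial forces a ``carry'' or digit violation somewhere) and the backward direction (no violations forces oddness) together give the full mod-$2$ equivalence, and each should follow by tracking parities through the periodicity of Theorem \ref{CheSagMain}.

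Finally I would assemble these pieces: the base case handles a single digit, Theorem \ref{CheSagMain} provides the periodicity that lets the induction descend by one digit, and the auxiliary parity lemma guarantees that the product on the right-hand side has the correct parity precisely when the left-hand side does. Chaining the congruences $\binom{n}{k}_F \equiv_2 \binom{n_0}{k_0}_F \binom{m}{\ell}_F \equiv_2 \binom{n_0}{k_0}_F \binom{n_1}{k_1}_F \cdots$ then yields the claimed factorization, completing the proof.
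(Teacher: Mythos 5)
This statement is quoted from Chen and Sagan; the paper does not reprove it, but describes their argument as a case-by-case analysis of the base-$\mathbf{F}$ digits of $n$ and $k$ driven by the Knuth--Wilf valuation theorem (Theorem \ref{KnuWil}) --- the same strategy this paper executes for $p=5$ in Lemma \ref{WeightLift}. Your route through the periodicity result (Theorem \ref{CheSagMain}) is therefore genuinely different, and it is not inherently circular, since Chen and Sagan establish that periodicity independently by a combinatorial involution. However, as written your inductive step has a real gap.

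The gap is in the reduction you lean on. You propose to peel off the \emph{lowest} digit, asserting something of the form ${n_0 + 3m \choose k_0 + 3\ell}_F \equiv_2 {n_0 \choose k_0}_F {m \choose \ell}_F$. Theorem \ref{CheSagMain} does not deliver this: it only shifts the top index $n$ by $3\cdot 2^m$ while keeping $k$ fixed and below $3 \cdot 2^m$, so it says nothing about adding multiples of $3$ to $k$ and provides no multiplicative splitting at the bottom of the expansion (for ordinary binomials that splitting comes from $(1+x)^p \equiv 1+x^p$, and no analogue is available here). You then defer to an ``auxiliary lemma'' asserting that ${n \choose k}_F$ is even iff $n_i < k_i$ for some $i$ --- but since every factor ${n_i \choose k_i}_F$ on the right-hand side is $0$ or $1$, that lemma \emph{is} the theorem, so invoking it makes your argument circular on its own terms. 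The fix is to peel off the \emph{highest} digit instead: writing $n = n' + 3\cdot 2^{m}$ with $n' < 3\cdot 2^{m}$, Theorem \ref{CheSagMain} handles all $k < 3\cdot 2^{m}$ directly (including the vanishing of the middle inverted block, since ${n' \choose k}_F = 0$ when $k > n'$), and the symmetry ${n \choose k}_F = {n \choose n-k}_F$ transports this to $k \geq 3\cdot 2^{m}$; induction on the number of digits, together with your correct base case that the first three rows are all $1$'s, then yields the product formula. With that reorganization your plan succeeds, but the central step as you stated it is unsupported.
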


As noted in the introduction, since $\textbf{F} = \mathcal{F}_2$, this shows Conjecture \ref{OpenProb} for when $p = 2$. Chen and Sagan prove their theorem with a case-by-case analysis of the congruence classes of $n$ and $k$, relying on a theorem of Knuth and Wilf involving the $p$-adic valuation of ${n \choose k}_F$. We will use this theorem in Section 3 to show Conjecture \ref{OpenProb} holds when $p = 5$.

\subsection{$p$-adic valuations of fibonomial coefficients}

Knuth and Wilf \cite{KnuWil89} study generalized binomial coefficients and derive conditions for when the prime $p$ divides ${n \choose k}_F$. Their main theorem regarding fibonomials is concerned with the $p$-adic valuation of ${n \choose k}_F$, that is, the highest power of $p$ dividing ${n \choose k}_F$.

\begin{definition}
Let $x \in \mathbb{N}$, and let $p$ be a prime. Then $\nu_p(x)$ is the $p$-valuation of $x$, i.e., the highest power of $p$ dividing $x$.
\end{definition}

The simplest way to calculate $\nu_p({n \choose k}_F)$ is to calculate the $p$-valuation of the numerator and subtract from this the $p$-valuation of the denominator. For example, $\nu_3({5 \choose 2}_F) = \nu_3(\frac{5\cdot3\cdot2\cdot1\cdot1}{(2\cdot1\cdot1)(1\cdot1)}) = \nu_3(30) - [\nu_3(2) + \nu_3(1)] = 1 - 0 - 0 = 1$. These definitions give us enough information to state the pertinent theorem of Knuth and Wilf:

\begin{theorem}[Knuth and Wilf \cite{KnuWil89}]
\label{KnuWil}
The highest power of the odd prime $p$ that divides the fibonomial coefficient ${m + n \choose m}_F$ is the number of carries that occur to the left of the radix point when $m/p^*$ is added to $n/p^*$ in $p$-ary notation, plus $\nu_p(F_{p^*})$ if a carry occurs across the radix point.
\end{theorem}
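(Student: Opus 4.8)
The plan is to reduce the statement to Kummer's classical theorem on carries, by first pinning down the $p$-adic valuation of individual Fibonacci numbers and then propagating that information through the definition ${m+n \choose m}_F = (m+n)!_F/\bigl(m!_F\,n!_F\bigr)$. Throughout I write $e = \nu_p(F_{p^*})$ and recall Legendre's formula $\nu_p(x!) = (x - s_p(x))/(p-1)$, where $s_p(x)$ denotes the sum of the base-$p$ digits of $x$.

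First I would establish the key arithmetic lemma: for the odd prime $p$ with entry point $p^*$, one has $\nu_p(F_j) = 0$ whenever $p^* \nmid j$, and $\nu_p(F_{t p^*}) = e + \nu_p(t)$ for every $t \geq 1$. The first half follows from the divisibility identity $\gcd(F_a, F_b) = F_{\gcd(a,b)}$ together with the minimality of $p^*$: if $p \mid F_j$ then $p \mid F_{\gcd(p^*,j)}$, which forces $p^* \mid j$. The ``lifting the exponent'' half is the genuinely number-theoretic step. I would prove it by writing $F_{t p^*}/F_{p^*} = \sum_{i=0}^{t-1} a^i b^{t-1-i}$, where $a = \phi^{p^*}$ and $b = \psi^{p^*}$ are the $p^*$-th powers of the two Binet roots; since $a - b = \sqrt5\,F_{p^*}$ is divisible by a prime $\mathfrak{p}$ above $p$ while $a,b$ are units there (because $ab = (-1)^{p^*}$), the sum is congruent to $t\,b^{t-1}$ modulo $\mathfrak{p}$, so it contributes exactly $\nu_p(t)$ additional factors of $p$ once the standard inductive bootstrap is carried out.

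With the lemma in hand, only the multiples of $p^*$ contribute to a fibotorial, so summing over $j = 1, \dots, J$ gives $\nu_p(j!_F) = e\lfloor j/p^*\rfloor + \nu_p\bigl(\lfloor j/p^*\rfloor!\bigr)$, the second term being an ordinary factorial valuation. Setting $M = \lfloor m/p^*\rfloor$, $N = \lfloor n/p^*\rfloor$, and $S = \lfloor (m+n)/p^*\rfloor$, subtraction yields
$$\nu_p\!\left({m+n \choose m}_F\right) = e\,(S - M - N) + \bigl[\nu_p(S!) - \nu_p(M!) - \nu_p(N!)\bigr].$$

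Finally I would interpret the two terms via the remainders $r_m = m - p^*M$ and $r_n = n - p^*N$. Since $m+n = p^*(M+N) + (r_m + r_n)$ with $0 \le r_m + r_n < 2p^*$, we have $S - M - N = 1$ exactly when $r_m + r_n \ge p^*$, which is precisely the event that a carry crosses the radix point when $m/p^*$ and $n/p^*$ are added in base $p$; this accounts for the $\nu_p(F_{p^*})$ contribution. For the bracketed term, Legendre's formula rewrites it as $\bigl(s_p(M) + s_p(N) + (S-M-N) - s_p(S)\bigr)/(p-1)$, which by Kummer's theorem counts the carries in adding $M$ and $N$ in base $p$ together with the unit carried in from the fractional part when present -- exactly the carries occurring to the left of the radix point. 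I expect the lifting-the-exponent lemma to be the main obstacle, since everything after it is bookkeeping once the valuation structure is known; the one subtlety to check is the prime $p = 5$, where $p^* = p$ ramifies in $\mathbb{Q}(\sqrt5)$ and $\nu_p(p^*) = 1$, so one must verify directly that $\nu_5(F_{5t}) = 1 + \nu_5(t)$ remains consistent with the stated formula.
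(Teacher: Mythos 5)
The paper never proves this theorem---it is imported verbatim from Knuth and Wilf \cite{KnuWil89}---so there is no internal proof to compare against; what you have written is an independent reconstruction, and it is essentially correct. Your two pillars are the right ones. The first, $\nu_p(F_j)=0$ when $p^*\nmid j$, follows from $\gcd(F_a,F_b)=F_{\gcd(a,b)}$ exactly as you say. The second, $\nu_p(F_{tp^*})=\nu_p(F_{p^*})+\nu_p(t)$, is indeed the crux, and you should note that its essential case $t=p$ is precisely the paper's Lemma \ref{valinc}: there the author proves it by pairing the terms of $\sum_{i=1}^{p}\alpha^{n(p-i)}\beta^{n(i-1)}$ into Lucas numbers and using $L_{2r}=5F_r^2+2(-1)^r$ to exhibit the sum as $p(p^{k-1}q\pm 1)$ --- an elementary route that stays inside $\mathbb{Z}$ and avoids choosing a prime $\mathfrak{p}$ above $p$ in $\mathbb{Q}(\sqrt{5})$ and tracking whether $p$ splits, is inert, or ramifies. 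Your version of that step is only sketched (``the standard inductive bootstrap''): to get the exact valuation for $t=p$ you must expand $\sum_{i=0}^{p-1}(b+\delta)^i b^{p-1-i}=pb^{p-1}+\binom{p}{2}b^{p-2}\delta+\cdots$ with $\delta=a-b$ and use that $p$ is odd so $p\mid\binom{p}{2}$ and that $\nu_{\mathfrak{p}}(\delta)\geq\nu_{\mathfrak{p}}(p)$; a first-order congruence mod $\mathfrak{p}$ alone only gives divisibility, not equality. The remaining bookkeeping is correct: $\nu_p(j!_F)=e\lfloor j/p^*\rfloor+\nu_p(\lfloor j/p^*\rfloor!)$, the quantity $S-M-N\in\{0,1\}$ detects the carry across the radix point and carries the weight $e=\nu_p(F_{p^*})$, and the identity $(p-1)\cdot(\mbox{number of carries})=s_p(M)+s_p(N)+c-s_p(M+N+c)$ with incoming carry $c=S-M-N$ accounts exactly for the carries to the left of the radix point. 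You are also right to single out $p=5$; there $\nu_5(F_n)=\nu_5(n)$ makes the lifting lemma immediate, so ramification does no harm.
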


For example, when $p = 7$, we have $7^* = 8$ since $F_8 = 21$ is the first Fibonacci number which 7 divides. Additionallly, we have $\nu_7(F_8) = 1$. Then given ${57 \choose 26}_F = {26+31 \choose 26}_F$, we have $26 = 3 \cdot 7 + 5 \cdot 1 = (3 \ 5)_7$ and $31 = 4 \cdot 7 + 3 \cdot 1 = (4 \ 3)_7$. To divide by 8 in base 7, we must express 8 as $8 = (1 \ 1)_7$, which gives $35/11 = 3.\overline{1515}$ and $43/11 = 3.\overline{6060}$. Then adding gives

\begin{center}
$
\begin{array}{cc}
& 3.\overline{1515} \\
+ & 3.\overline{6060} \\ \hline
= & 10.\overline{0606} \\
\end{array}
$
\end{center}

\noindent where one carry occurs across the radix point and one carry occurs from the 1's place to the 7's place. Thus by Theorem \ref{KnuWil}, $\nu_7({57 \choose 26}_F) = \nu_7(F_8) + 1 = 1 + 1 = 2$.

We wish to use Theorem \ref{KnuWil} to show that Conjecture \ref{OpenProb} holds when $p = 5$, as these methods illustrate how the conjecture might be proven in the case of a more general prime $p$. For our purposes in the $p=5$ case, we need the fact that $5^* = 5$, since the first Fibonacci number divisible by 5 is $F_5$. Then $\nu_5(F_{5^*}) = \nu_5(F_5) = \nu_5(5) = 1$. Using this fact, we will prove the following corollary of Knuth and Wilf's theorem:

\begin{corollary}
\label{FiveVal}
The highest power of 5 that divides the fibonomial coefficient ${m + n \choose m}_F$ is the number of carries that occur when $m/5$ is added to $n/5$ in 5-ary notation.
\end{corollary}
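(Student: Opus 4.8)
The plan is to specialize Theorem~\ref{KnuWil} to $p = 5$ and to exploit the two numerical coincidences that make its formula collapse into a plain carry count. I would begin by recording the facts already noted: $5^* = 5$, and $\nu_5(F_{5^*}) = \nu_5(F_5) = \nu_5(5) = 1$.

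Because $p^* = p = 5$, the quantity $m/p^* = m/5$ appearing in the theorem is obtained from $m$ by a single base-$5$ shift. Writing $(m)_5 = (m_0\ m_1\ m_2\ \cdots)_5$, I would observe that $m/5$ has integer part $(m_1\ m_2\ \cdots)_5$ and exactly one digit, namely $m_0$, to the right of the radix point; the same holds for $n/5$ with rightmost digit $n_0$. This termination is exactly what the hypothesis $5^* = 5$ buys us: for a prime like $7$, where $7^* = 8 \neq 7$, division by $(1\ 1)_7$ produces a repeating expansion, but dividing by $5 = p$ in base $5$ merely shifts the point.

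With this in hand I would split the carries of the sum $m/5 + n/5$ into those at or right of the radix point and those strictly to its left. Since each summand has a single nonzero place to the right of the point, the only carry that can cross the radix point is the one out of the $5^{-1}$ place, and it occurs precisely when $m_0 + n_0 \geq 5$. Theorem~\ref{KnuWil} then reads: $\nu_5\!\left({m+n \choose m}_F\right)$ equals the number of carries strictly left of the radix point, plus $\nu_5(F_{5^*}) = 1$ if such an across-the-point carry occurs.

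The corollary is now immediate from the coincidence $\nu_5(F_{5^*}) = 1$: the distinguished across-radix carry is weighted by $1$, exactly as every ordinary carry to the left of the point, so the Knuth--Wilf expression is literally the total number of carries generated in computing $m/5 + n/5$ in base $5$. I do not expect a real obstacle; the only things to check carefully are that $5^* = 5$ (so that exactly one digit sits past the radix point, and hence at most one across-point carry arises) and that $\nu_5(F_{5^*}) = 1$ (so that this carry is counted on the same footing as the rest). Both verifications are routine.
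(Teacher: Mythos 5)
Your proposal is correct and follows essentially the same route as the paper's proof: both specialize Theorem~\ref{KnuWil} using $5^* = 5$ (so that $m/5$ and $n/5$ terminate one digit past the radix point, ruling out carries further right) and $\nu_5(F_{5^*}) = 1$ (so the across-radix carry is counted on equal footing with the others). No gaps.
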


\begin{proof}
Break the carries down into three types: those occurring to the left of the radix point, a carry occurring across it, and those occurring to the right of the radix point. Theorem \ref{KnuWil} simply counts the number of the first type of carry, as does this corollary. For a carry occurring across the radix point, Theorem \ref{KnuWil} counts $\nu_p(F_{p^*})$. But we have $\nu_5(F_{5^*}) = 1$, so we can simply count whether or not this carry occurs. Lastly, Theorem \ref{KnuWil} doesn't count carries to the right of the radix point. But no such carries occur for our corollary: Since division by 5 in base 5 simply moves the radix point one space to the left (identical to division by 10 in base 10), the numbers $m/5$ and $n/5$ each terminate one place to the right of the radix point, thus eliminating the possibility of any carries further to the right of the radix point. So counting the total number of carries is identical to counting the number of carries described in Theorem \ref{KnuWil}. $\qed$
\end{proof}

\subsection{Divisibility of Fibonacci numbers by a prime $p$}

To give necessary conditions for Conjecture \ref{OpenProb}, we will need several well-known facts about the divisibility of Fibonacci numbers. Among these are 

\begin{center}
$\gcd(F_n, F_m) = F_{\gcd(n,m)}$ and its corollary $F_n \mid F_m \Leftrightarrow n \mid m$,
\end{center}

as well as Binet's formula, which denotes the distinct roots of $x^2 - x - 1$ as $\alpha$ and $\beta$, with the fact that

\begin{center}
$F_{n} = \frac{1}{\sqrt{5}}(\alpha^{n} - \beta^{n})$.
\end{center}

We must also introduce the Lucas numbers $(L_n) = (2, 1, 3, 4, 7, \cdots)$, which have the similar relation

\begin{center}
$L_n = \alpha^n + \beta^n$.
\end{center}

In \cite{HogBer74}, Hoggatt and Bergum make use of these facts to prove a theorem comparing the divisibility by $p$ of $F_n$ with that of $F_{np^k}$:

\begin{theorem}[Hoggatt and Bergum \cite{HogBer74}]
\label{berhog}
If $p$ is an odd prime and $p \mid F_n$ then $p^k \mid F_{np^{k-1}}$ for all $k \geq 1.$
\end{theorem}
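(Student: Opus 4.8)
The plan is to induct on $k$. The base case $k=1$ is precisely the hypothesis $p \mid F_n = F_{np^0}$. For the inductive step, assume $p^k \mid F_{np^{k-1}}$ and set $m = np^{k-1}$, so that $F_{np^k} = F_{mp}$ and $\nu_p(F_m) \ge k$. It then suffices to establish the single multiplicative claim that $\nu_p(F_{mp}) \ge \nu_p(F_m) + 1$ whenever $p \mid F_m$; applying this with our $m$ yields $\nu_p(F_{np^k}) \ge k+1$, which is the desired $p^{k+1} \mid F_{np^k}$. Thus the entire theorem reduces to proving this one valuation-raising step.

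To prove the multiplicative claim I would pass to Binet's formula. Writing $a = \alpha^m$ and $b = \beta^m$, we have $a - b = \sqrt5\,F_m$ and $a + b = L_m$, so that $a = (L_m + \sqrt5\,F_m)/2$ and $b = (L_m - \sqrt5\,F_m)/2$. Since $\sqrt5\,F_{mp} = \alpha^{mp} - \beta^{mp} = a^p - b^p$, expanding $(L_m \pm \sqrt5\,F_m)^p$ by the binomial theorem and subtracting cancels the even-indexed terms and doubles the odd ones, which after clearing the factor of $\sqrt5$ yields the closed form
\[
F_{mp} = \frac{1}{2^{\,p-1}} \sum_{\substack{j \text{ odd} \\ 1 \le j \le p}} \binom{p}{j}\, L_m^{\,p-j}\, 5^{(j-1)/2}\, F_m^{\,j}.
\]
Everything now rests on a $p$-adic estimate of the individual summands of this sum.

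The core of the argument is to show that every summand has $p$-valuation at least $\nu_p(F_m)+1$. For $j=1$ the coefficient $\binom{p}{1} = p$ supplies one power of $p$ on top of the single factor $F_m$; for odd $j$ with $3 \le j \le p-2$ the coefficient $\binom{p}{j}$ is divisible by $p$ while $F_m^{\,j}$ contributes at least $\nu_p(F_m)$; and for the top term $j = p$ there is no help from the binomial coefficient, so one must instead use $\nu_p\bigl(F_m^{\,p}\bigr) = p\,\nu_p(F_m) \ge \nu_p(F_m) + (p-1) \ge \nu_p(F_m) + 2$. Because $p$ is odd the denominator $2^{\,p-1}$ has $p$-valuation $0$ and cannot lower the estimate, so $\nu_p(F_{mp}) \ge \nu_p(F_m)+1$, completing the claim and hence the induction.

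I expect the delicate point to be the boundary term $j = p$: it is the unique summand whose extra factor of $p$ comes not from a binomial coefficient but from the hypothesis $p \mid F_m$ together with the oddness of $p$ (so that $p \ge 3$ and $p-1 \ge 2$), which is exactly where the assumption that $p$ is an odd prime becomes essential. This term also carries the stray power $5^{(p-1)/2}$, which must be recognized as harmless, being coprime to $p$ when $p \ne 5$ and only \emph{increasing} the valuation in the single case $p = 5$. A cleaner bookkeeping alternative is to divide through by $F_m$ at the outset---legitimate because $m \mid mp$ forces $F_m \mid F_{mp}$ by the stated corollary $F_n \mid F_m \Leftrightarrow n \mid m$---and to show $p \mid F_{mp}/F_m$ directly; the term-by-term analysis is identical but avoids tracking valuations beyond the first power.
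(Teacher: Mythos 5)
Your proof is correct, but it takes a genuinely different route from the paper's. The paper does not reprove Theorem \ref{berhog} itself (it cites \cite{HogBer74}); the closest argument it gives is the proof of the sharper Lemma \ref{valinc}, which follows Hoggatt and Bergum: factor $\alpha^{np}-\beta^{np}=(\alpha^{n}-\beta^{n})\sum_{i=1}^{p}\alpha^{n(p-i)}\beta^{n(i-1)}$, pair the $i$th and $(p+1-i)$th terms of the geometric sum into Lucas numbers, and use $L_{2r}=5F_r^2+2(-1)^r$ to rewrite the cofactor of $F_n$ as $\sum_{i=1}^{(p-1)/2}(\pm1)\,5F^2_{n(p-2i+1)/2}+p(\pm 1)$, whose $p$-valuation is exactly $1$. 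You instead write $\alpha^m=(L_m+\sqrt5\,F_m)/2$, $\beta^m=(L_m-\sqrt5\,F_m)/2$, expand $\alpha^{mp}-\beta^{mp}$ by the binomial theorem to get $F_{mp}=2^{1-p}\sum_{j\ \mathrm{odd}}\binom{p}{j}L_m^{p-j}5^{(j-1)/2}F_m^{j}$, and bound each summand; this is the Lucas-sequence analogue of the lifting-the-exponent argument. Your term-by-term estimates check out, including the delicate $j=p$ term where $(p-1)\nu_p(F_m)\ge 2$ does the work, and the induction on $k$ is the standard reduction. The trade-off: the paper's pairing yields the exact equality $\nu_p(F_{np})=\nu_p(F_n)+1$ with no extra input, which it needs later for Lemma \ref{neclem}, whereas your expansion as written gives only the inequality --- sufficient for Theorem \ref{berhog}, and upgradable to equality by adding the observation that $p\nmid L_m$ (from $L_m^2-5F_m^2=\pm4$), which makes the $j=1$ term the unique summand of minimal valuation.
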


We desire a slightly stronger result, namely, that the $p$-adic valuation increases by exactly 1 when the subscript of a Fibonacci number divisible by $p$ is multipled by $p$. This is in fact true, and we can generalize the proof of Theorem \ref{berhog} to prove the desired result:

\begin{lemma}
\label{valinc}
Let $p$ be an odd prime. Then if $\nu_p(F_n) = k > 0$, we have $\nu_p(F_{np}) = k+1$.
\end{lemma}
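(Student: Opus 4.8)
The plan is to reduce the claim to a statement about the single ratio $F_{np}/F_n$, which is an integer because $n \mid np$ forces $F_n \mid F_{np}$ by the divisibility property stated above. Since valuations are additive, $\nu_p(F_{np}) = \nu_p(F_n) + \nu_p(F_{np}/F_n) = k + \nu_p(F_{np}/F_n)$, so it suffices to prove that $\nu_p(F_{np}/F_n) = 1$. In the spirit of generalizing the proof of Theorem \ref{berhog}, I would get a hold of this ratio using Binet's formula together with the Lucas numbers: writing $\alpha^n = \tfrac{1}{2}(L_n + \sqrt5\, F_n)$ and $\beta^n = \tfrac{1}{2}(L_n - \sqrt5\, F_n)$ and expanding $\alpha^{np} - \beta^{np}$ by the binomial theorem, the even powers of $\sqrt5\, F_n$ cancel while the odd powers survive, yielding the polynomial identity
\[
2^{p-1}F_{np} = \sum_{\substack{1 \le i \le p \\ i \text{ odd}}} {p \choose i}\, L_n^{\,p-i}\, 5^{(i-1)/2}\, F_n^{\,i}.
\]
Dividing through by $F_n$ presents $2^{p-1}(F_{np}/F_n)$ as a sum of terms, one for each odd $i$, and the whole argument comes down to comparing the $p$-adic valuations of these terms. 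Note that $2^{p-1}$ is a $p$-adic unit since $p$ is odd, so it may be ignored when computing $\nu_p$.

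The leading term, coming from $i=1$, is $p\, L_n^{\,p-1}$, and I claim it is the unique term of valuation exactly $1$. For this I need $p \nmid L_n$, which follows from the identity $L_n^2 - 5F_n^2 = 4(-1)^n$ (immediate from Binet): if $p$ divided both $F_n$ and $L_n$ it would divide $4(-1)^n$, which is impossible for an odd prime. Hence $\nu_p\bigl(p\, L_n^{\,p-1}\bigr) = 1$.

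It then remains to show that every other term has valuation at least $2$. For odd $i$ with $3 \le i \le p-2$ (these appear only when $p \ge 5$), the coefficient ${p \choose i}$ is divisible by $p$, and the factor $F_n^{\,i-1}$ contributes $\nu_p = k(i-1) \ge 2$, so such a term has valuation at least $3$. For the top term $i = p$, the coefficient ${p \choose p} = 1$ is a unit, but $F_n^{\,p-1}$ contributes $\nu_p = k(p-1) \ge p-1 \ge 2$. Thus the sum has a single term of minimal valuation $1$, forcing $\nu_p(2^{p-1}F_{np}/F_n) = 1$, hence $\nu_p(F_{np}/F_n) = 1$ and finally $\nu_p(F_{np}) = k+1$.

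I expect the main obstacle to be bookkeeping rather than a deep idea: one must verify the binomial identity cleanly, tracking both the cancellation of the even-index terms and the emergence of the factors $5^{(i-1)/2}$, and then confirm that no non-leading term can accidentally drop to valuation $1$. The only genuinely essential input beyond these routine valuation estimates is the coprimality $p \nmid L_n$, which is exactly what keeps the leading term $p\, L_n^{\,p-1}$ at valuation $1$ rather than higher; everything else follows from the facts that $p \mid {p \choose i}$ for $0 < i < p$ and that $\nu_p(F_n) = k \ge 1$.
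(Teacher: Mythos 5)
Your proof is correct, and it takes a genuinely different route from the paper's, although both arguments generalize the Hoggatt--Bergum computation via Binet's formula and both come down to showing that the integer cofactor $F_{np}/F_n$ has $p$-adic valuation exactly $1$. The paper factors $\alpha^{np}-\beta^{np}$ as $(\alpha^n-\beta^n)\sum_{i=1}^{p}\alpha^{n(p-i)}\beta^{n(i-1)}$, pairs the $i^{th}$ and $(p+1-i)^{th}$ terms of the cofactor into Lucas numbers of even index, and converts these via $L_{2r}=5F_r^2+2(-1)^r$ to write the cofactor as $\sum_{i=1}^{(p-1)/2}(-1)^{n(i-1)}5F^2_{n(p-2i+1)/2}+p(-1)^{n(p-1)/2}$; the key input is that each subscript $n(p-2i+1)/2$ is a multiple of $n$, so each squared Fibonacci term is divisible by $p^{2k}$, leaving a quantity of the form $p(p^{2k-1}q\pm1)$. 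You instead expand $\bigl(\tfrac{1}{2}(L_n\pm\sqrt5\,F_n)\bigr)^p$ by the binomial theorem and read off $2^{p-1}F_{np}/F_n=\sum_{i\ \mathrm{odd}}{p \choose i}L_n^{p-i}5^{(i-1)/2}F_n^{i-1}$, isolating the $i=1$ term $pL_n^{p-1}$ as the unique term of valuation $1$. Your route needs two inputs the paper does not: the coprimality $p\nmid L_n$ (correctly derived from $L_n^2-5F_n^2=4(-1)^n$, and genuinely essential --- without it the leading term could have higher valuation) and $p\mid{p \choose i}$ for $0<i<p$; in exchange you avoid the pairing trick and the divisibility $F_n\mid F_{n(p-2i+1)/2}$. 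Both arguments handle $p=3$ and $p=5$ correctly (in your case the middle range $3\le i\le p-2$ is empty for $p=3$, and the extra powers of $5$ only help when $p=5$). The valuation bookkeeping in your final step is right: every non-leading term has valuation at least $\min(1+2k,\,k(p-1))\ge 2$, so the sum has valuation exactly $1$.
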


\begin{proof}
Assume that $\nu_p(F_n) = k$ for some positive $k$. We wish to show that $\nu_p(F_{np}) = k+1$. Using Binet's formula, we have 

\begin{center}
$F_{np} = \frac{1}{\sqrt{5}}(\alpha^{np} - \beta^{np})$.
\end{center}

Then by factoring $\alpha^{np} - \beta^{np}$, we arrive at

\begin{center}
$F_{np} = \frac{1}{\sqrt{5}}(\alpha^{np} - \beta^{np}) = \frac{1}{\sqrt{5}}(\alpha^n - \beta^n)(\sum_{i = 1}^p \alpha^{n(p-i)}\beta^{n(i-1)})$

$=F_n(\sum_{i = 1}^p \alpha^{n(p-i)}\beta^{n(i-1)})$.
\end{center}

In the summation $\sum_{i = 1}^p \alpha^{n(p-i)}\beta^{n(i-1)}$, the middle term is $(-1)^{n(p-1)/2}$ since $\alpha \beta = -1$. As in \cite{HogBer74} we can show that the sum of the $i^{th}$ and $(p+1-i)^{th}$ terms, where $i \not = (p+1)/2$, is

\begin{center}
$\alpha^{n(p-i)}\beta^{n(i-1)} + \alpha^{n(i-1)}\beta^{n(p-i)} = (\alpha \beta)^{n(i-1)}(\alpha^{n(p-2i+1)} + \beta^{n(p-2i+1)})$

$= (-1)^{n(i-1)}(\alpha^{2n(p-2i+1)/2} + \beta^{2n(p-2i+1)/2}) =(-1)^{n(i-1)}L_{2n(p-2i+1)/2}$.
\end{center}

We follow \cite{HogBer74} in writing the subscript this way because of the relation

\begin{center}
$L_{2r} = \alpha^{2r} + \beta^{2r} = \alpha^{2r} - 2(\alpha \beta)^r + \beta^{2r} + 2(-1)^r = (\alpha^r - \beta^r)^2 + 2(-1)^r$

$=5\frac{(\alpha^r - \beta^r)^2}{(\sqrt{5})^2} + 2(-1)^r = 5F^2_r + 2(-1)^r$.
\end{center}

Thus we have that the sum of the two terms is

\begin{center}
$(-1)^{n(i-1)}L_{2n(p-2i+1)/2}=(-1)^{n(i-1)}5F^2_{n(p-2i+1)/2} + 2(-1)^{n(2i-2+p-2i+1)/2}$

$= (-1)^{n(i-1)}5F^2_{n(p-2i+1)/2} + 2(-1)^{n(p-1)/2}$.
\end{center}

Grouping these terms together in the summation, we arrive at

\begin{center}
$F_{np} = F_n \Big{(}\sum_{i=1}^{(p-1)/2} (-1)^{n(i-1)}5F^2_{n(p-2i+1)/2} + p(-1)^{n(p-1)/2}\Big{)}$.
\end{center}

Now using our hypothesis that $\nu_p(F_n) = k$, let us inspect the term in parentheses. Each Fibonacci number in the summation has a subscript that $n$ divides, giving that $F_n$ divides each summand and hence $p^k$ divides each summand. But then the whole term takes the form $p^kq \pm p = p(p^{k-1}q \pm 1)$, so only $p^1$ divides it, and no higher powers. Hence $\nu_p(F_{np}) = \nu_p(F_n) + 1 = k+1$. $\qed$
\end{proof}

\section{Proof of Conjecture \ref{OpenProb} when $p = 5$}

\begin{figure}
\caption{The Fibonomial Triangle mod 5}
\label{FibTri5}
\begin{center}
$
\begin{array}{ccccccccccccccccccc}
& & & & & & & & & 1 & & & & & & & & & \\
& & & & & & & & 1 & & 1 & & & & & & & & \\
& & & & & & & 1 & & 1 & & 1 & & & & & & & \\
& & & & & & 1 & & 2 & & 2 & & 1 & & & & & & \\
& & & & & 1& & 3 & & 1 & & 3 & & 1 & & & & & \\
& & & & 1 & & 0 & & 0 & & 0 & & 0 & & 1 & & & & \\
& & & 1 & & 3 & & 0 & & 0 & & 0 & & 3 & & 1 & & & \\
& & 1 & & 3 & & 4 & & 0 & & 0 & & 4 & & 3 & & 1 & & \\
& 1 & & 1 & & 3 & & 2 & & 0 & & 2 & & 3 & & 1 & & 1 & \\
1 & & 4 & & 4 & & 1 & & 1 & & 1 & & 1 & & 4 & & 4 & & 1 \\
\end{array}
$
\end{center}
\end{figure}

Since we are concerned with the fibonomial triangle mod 5, we have reproduced the first 10 rows of that triangle in Figure \ref{FibTri5}. As noted in \cite{Bal15}, divisibility by 5 of ${n \choose k}_F$ is entirely understood, since the 5-adic valuation of a Fibonacci number ($\nu_5(F_n)$) is the same as the 5-adic valuation of its subscript ($\nu_5(n)$), and hence $\nu_5({n \choose k}_F) = \nu_5({n \choose k})$, since ${n \choose k}$ is obtained by replacing the Fibonacci numbers in ${n \choose k}_F$ with their subscripts. Thus by Theorem \ref{lucas}, Conjecture \ref{OpenProb} holds for $p = 5$. However, we will show this result using Corollary \ref{FiveVal}, to illustrate how the result could be extended to more general primes.

As observed previously, we have $5^* = 5$. Thus the base $\mathcal{F}_5 = (1, 5, 5^2, \cdots)$ is simply base 5. Hence we wish to show the following lemma:

\begin{lemma}
\label{WeightLift}
Let $(n)_5 = (n_i)_{i\geq0}, (n-k)_5 = (m_i)_{i\geq0}$ and $(k)_5 = (k_i)_{i\geq0}$. Then $$5 \mid {n \choose k}_F \Leftrightarrow 5 \mid {n_0 \choose k_0}_F{n_1 \choose k_1}_F{n_2 \choose k_2}_F\cdots.$$
\end{lemma}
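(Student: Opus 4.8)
The plan is to reduce the biconditional to a statement about carries, using Corollary \ref{FiveVal} as the main tool. The key observation is that for a product of fibonomials, $5$ divides the product if and only if $5$ divides at least one factor, so the right-hand side is governed by whether \emph{any} of the single-digit fibonomials ${n_i \choose k_i}_F$ is divisible by $5$. Meanwhile, by Corollary \ref{FiveVal}, the left-hand side is governed by whether the addition of $m/5$ and $k/5$ in base $5$ produces any carries, where $m = n-k$. So the whole lemma will follow once I show that a carry occurs somewhere in the base-$5$ addition of $m$ and $k$ (equivalently, of $m/5$ and $k/5$) if and only if some individual digit fibonomial ${n_i \choose k_i}_F$ is divisible by $5$.

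First I would set up the digit-by-digit comparison. Writing $(k)_5 = (k_i)$ and $(m)_5 = (m_i)$ with $n = k + m$, the digits $n_i$ of $n$ are determined by adding $k_i + m_i$ with carries. The cleanest route is to analyze a single digit position: a carry is produced out of position $i$ exactly when $k_i + m_i$ (plus any incoming carry) is at least $5$. I would like to tie this to divisibility of ${n_i \choose k_i}_F$ by $5$. Since $5^* = 5$ and $\nu_5(F_t) = \nu_5(t)$ (as noted in the remarks preceding the lemma), applying Corollary \ref{FiveVal} to the \emph{single-digit} fibonomial ${n_i \choose k_i}_F = {k_i + m_i \choose k_i}_F$ tells me that $5 \mid {n_i \choose k_i}_F$ if and only if adding $k_i/5$ to $m_i/5$ in base $5$ produces a carry. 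For single digits $0 \le k_i, m_i < 5$, that base-$5$ carry occurs precisely when $k_i + m_i \ge 5$, i.e. when the two digits overflow.

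The main obstacle is the interaction between the two sides through carries: on the left, Corollary \ref{FiveVal} counts carries in the addition of the \emph{full} numbers $m$ and $k$, where a carry into position $i$ can change whether position $i$ itself overflows, whereas the right-hand side looks at each digit \emph{in isolation} (ignoring incoming carries). I would handle this by proving the biconditional in the contrapositive form: \emph{no} carry occurs anywhere in adding $k$ and $m$ in base $5$ if and only if \emph{no} digit satisfies $k_i + m_i \ge 5$. The forward direction is immediate, since if $k_i + m_i \ge 5$ for some $i$ then a carry must occur out of that position regardless of incoming carries. For the converse, if $k_i + m_i \le 4$ for every $i$, then adding $k$ and $m$ proceeds with no digit ever reaching $5$ and hence no carry is ever generated or propagated (a formal induction on the digit index $i$, tracking that the incoming carry is always $0$, makes this precise). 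This establishes that the total carry count is zero exactly when every single-digit sum stays below $5$, which by the digit-wise application of Corollary \ref{FiveVal} is exactly the condition that $5 \nmid {n_i \choose k_i}_F$ for all $i$. Negating both sides gives the desired equivalence between $5 \mid {n \choose k}_F$ and $5$ dividing the product, completing the proof.
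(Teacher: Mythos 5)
Your overall architecture is the same as the paper's: reduce both sides to the presence of a carry in the base-$5$ addition of $k$ and $n-k$ via Corollary \ref{FiveVal}, and handle the carry-propagation issue by locating the first position where an overflow occurs. Your argument that ``no carry anywhere'' is equivalent to ``$k_i + m_i \le 4$ for every $i$'' is correct and is essentially the paper's bookkeeping in a cleaner contrapositive form.

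However, there is a genuine gap in the step you call the ``digit-wise application of Corollary \ref{FiveVal}.'' You write ${n_i \choose k_i}_F = {k_i + m_i \choose k_i}_F$, but $n_i$ is a base-$5$ digit of $n$, so $n_i \neq k_i + m_i$ exactly in the case you care about: if $k_i + m_i \ge 5$ then $n_i = k_i + m_i - 5$ (possibly further shifted by an incoming carry), and ${n_i \choose k_i}_F$ is a different object from ${k_i+m_i \choose k_i}_F$. Indeed the digit-wise equivalence ``$5 \mid {n_i \choose k_i}_F \Leftrightarrow k_i + m_i \ge 5$'' is simply false at a position receiving an incoming carry: with $k_i = 1$, $m_i = 4$ and a carry in, one gets $n_i = 1$ and ${n_i \choose k_i}_F = 1$, not divisible by $5$, even though $k_i + m_i \ge 5$. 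What actually makes the lemma work --- and what is missing from your proposal --- is the observation that among the $25$ single-digit fibonomials ${a \choose b}_F$ with $0 \le a, b \le 4$, the only ones divisible by $5$ are the zero entries with $a < b$ (since $1,2,3,6$ are the only other values occurring). With that in hand, your argument is repaired by noting that at the \emph{first} overflow position $j$ there is no incoming carry, so $n_j = k_j + m_j - 5 < k_j$ and hence ${n_j \choose k_j}_F = 0 \equiv 0 \pmod 5$; and conversely, when no overflow occurs anywhere, $n_i = k_i + m_i \ge k_i$ for all $i$ and none of the digit fibonomials is divisible by $5$. This is precisely the role played in the paper's proof by its table of ten overflow cases and its analysis of the subtraction $n - k$.
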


\begin{proof}
First note that by our conditions on $n_i$ and $k_i$, for each choice of $n_i$ and $k_i$ we have ${n_i \choose k_i}_F$ appearing in one of the following 25 positions on the fibonomial triangle:

\begin{center}
$
\begin{array}{cccccccccccccccccc}
& & & & & & & & & 1 & & 0 & & 0 & & 0 & & 0 \\
& & & & & & & & 1 & & 1 & & 0 & & 0 & & 0 & \\
& & & & & & & 1 & & 1 & & 1 & & 0 & & 0 & & \\
& & & & & & 1 & & 2 & & 2 & & 1 & & 0 & & & \\
& & & & & 1& & 3 & & 6 & & 3 & & 1 & & & & \\
\end{array}
$
\end{center}

The inverted triangle of zeroes contains the coefficients ${n_i \choose k_i}_F$ with $n_i < k_i$. Thus $5 \mid {n_i \choose k_i}_F \Leftrightarrow n_i < k_i$, since no other multiples of five appear in the 25 positions in question. With that in mind, we proceed with the proof.

$\underline{(\Rightarrow)}:$ By Corollary \ref{FiveVal}, if $5 \mid {n \choose k}_F$ then there is a carry occurring in the addition of $(\frac{k}{5})_5$ and $(\frac{n-k}{5})_5$. But as noted above, division by 5 in base 5 simply moves the radix point one point to the left, so this means there is a carry in the addition of $(k)_5$ and $(n-k)_5$. Let the first carry occur at the $j^{th}$ position, that is, let the addition $k_j + m_j$ have a carry such that for $i < j, k_i + m_i$ has no carry. There are ten possibilities for $n_j, k_j$, and $m_j$, as shown in the table below:

\begin{center}
$
\begin{array}{c|cccccccccc}
n_j & 0 & 1 & 2 & 3 & 0 & 1 & 2 & 0 & 1 & 0 \\ \hline
k_j & 4 & 4 & 4 & 4 & 3 & 3 & 3 & 2 & 2 & 1 \\
m_j & 1 & 2 & 3 & 4 & 2 & 3 & 4 & 3 & 4 & 4 \\
\end{array}
$
\end{center}

Each of these cases corresponds to a fibonomial ${n_j \choose k_j}_F$ that has $n_j < k_j$. But then as noted, we have $5 \mid {n_j \choose k_j}_F$ and consequently $5 \mid {n_0 \choose k_0}_F{n_1 \choose k_1}_F{n_2 \choose k_2}_F\cdots$.

$\underline{(\Leftarrow)}:$ Conversely, let $5 \mid {n_0 \choose k_0}_F{n_1 \choose k_1}_F{n_2 \choose k_2}_F\cdots$. Then $\exists j$ such that $5 \mid {n_j \choose k_j}_F$. Without loss of generality let $j$ be the least such $j$. We will show that $n_j, k_j$, and $m_j$ are as in one of the cases in the above table. First, note that since $5 \mid {n_j \choose k_j}_F$, we have $n_j < k_j$. Thus ${n_j \choose k_j}_F$ is one of the 10 cases above, since those are the only cases where $n_j < k_j$.  Then we can show that $m_j$ follows by minimality of $j$. We are working with the following subtraction problem:

\begin{center}
$
\begin{array}{cc}
& n_0 + 5n_1 + 5^2n_2 + \cdots \\
- & (k_0 + 5k_1 + 5^2k_2 + \cdots)\\
\end{array}
$
\end{center}

For each $i < j, {n_i \choose k_i}_F \not = 0$, since $5 \nmid {n_i \choose k_i}_F$. Then by definition, each ${n_i \choose k_i}_F$ has $n_i \geq k_i$. So there is no carrying involved in calculating $m_i$, which is exactly $n_i - k_i$ for each $i <j$:

\begin{center}
$
\begin{array}{ccccc}
& n_0 & + 5n_1 & + \cdots & + 5^{j-1}n_{j-1} \\
- & k_0 & - 5k_1 & - \cdots & - 5^{j-1}k_{j-1}\\ \hline
= & (n_0-k_0) & + 5(n_1 - k_1) & + \cdots & + 5^{j-1}(n_{j-1} - k_{j-1}) \\
\end{array}
$
\end{center}

Then $m_j$ is simply $5 + n_j - k_j$, where the 5 is carried from $n_{j+1}$. This verifies that $m_j$ is as in the table above. But then a carry occurs in the addition of $(k)_5$ and $(n-k)_5$, meaning a carry occurs in the addition of $(\frac{k}{5})_5$ and $(\frac{n-k}{5})_5$, giving $5 \mid {n \choose k}_F. \qed$
\end{proof}

Thus Conjecture \ref{OpenProb} holds when $p = 5$. Lemma \ref{WeightLift} gives the following corollary, which we include because of its similarity to Theorem \ref{CheSagMain}.

\begin{corollary}
\label{MainCorollary}
Let $0 \leq k, n <5^m$. Then for $ 0 \leq j \leq i \leq 4$, we have $$5 \mid {n + i5^m \choose k + j5^m}_F \Leftrightarrow 5 \mid {n \choose k}_F$$.
\end{corollary}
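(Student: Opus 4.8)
The plan is to obtain this statement as a direct consequence of Lemma~\ref{WeightLift}, by comparing the base-5 expansion of the shifted pair $(n+i5^m,\,k+j5^m)$ with that of $(n,k)$. First I would record the relevant digits. Since $0 \le n,k < 5^m$, we may write $(n)_5 = (n_0 \ \cdots \ n_{m-1} \ 0 \ 0 \ \cdots)$ and $(k)_5 = (k_0 \ \cdots \ k_{m-1} \ 0 \ 0 \ \cdots)$. Because $n < 5^m$, the digit of $n$ in the $5^m$ place is $0$, so adding $i5^m$ with $i \le 4$ produces no carry and simply installs $i$ in that position; thus $(n+i5^m)_5 = (n_0 \ \cdots \ n_{m-1} \ i \ 0 \ \cdots)$, and likewise $(k+j5^m)_5 = (k_0 \ \cdots \ k_{m-1} \ j \ 0 \ \cdots)$.

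Next I would apply Lemma~\ref{WeightLift} to each pair. For the shifted pair, every digit beyond position $m$ contributes a factor ${0 \choose 0}_F = 1$, so the lemma gives
$$5 \mid {n+i5^m \choose k+j5^m}_F \ \Leftrightarrow\ 5 \mid {i \choose j}_F \prod_{l=0}^{m-1}{n_l \choose k_l}_F .$$
For the original pair, the same reasoning (with a $0$ in position $m$ as well) yields
$$5 \mid {n \choose k}_F \ \Leftrightarrow\ 5 \mid \prod_{l=0}^{m-1}{n_l \choose k_l}_F .$$
Hence the two right-hand products differ only by the single factor ${i \choose j}_F$.

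The crux is then to show that this extra factor is a unit modulo $5$, i.e. that $5 \nmid {i \choose j}_F$ whenever $0 \le j \le i \le 4$. This is immediate from the observation opening the proof of Lemma~\ref{WeightLift}: among the $25$ single-digit fibonomials ${a \choose b}_F$ with $0 \le a,b \le 4$, the only multiples of $5$ are precisely those in the inverted triangle of zeros, where $a < b$. The hypothesis $j \le i$ places ${i \choose j}_F$ outside that triangle, so $5 \nmid {i \choose j}_F$. Since multiplication by a factor coprime to $5$ cannot change divisibility by $5$, the two right-hand conditions above are equivalent, and chaining the three equivalences gives $5 \mid {n+i5^m \choose k+j5^m}_F \Leftrightarrow 5 \mid {n \choose k}_F$, as desired.

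No serious obstacle arises, as the statement is a genuine corollary of Lemma~\ref{WeightLift}; the only points needing care are that the shift by one base-5 digit in position $m$ creates no carry (guaranteed by $n,k < 5^m$) and that the hypothesis $j \le i$ is exactly what keeps ${i \choose j}_F$ off the zero-triangle. This last hypothesis is essential rather than cosmetic: if instead $j > i$, then $5 \mid {i \choose j}_F$, and the forward equivalence would fail for every pair with $5 \nmid {n \choose k}_F$.
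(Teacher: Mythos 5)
Your proposal is correct and follows essentially the same route as the paper: apply Lemma~\ref{WeightLift} to both pairs, observe that the base-5 expansions differ only by the single digit pair $(i,j)$ in position $m$, and note that $5 \nmid {i \choose j}_F$ because $j \le i$ keeps it off the zero triangle. If anything, your version is slightly more careful than the paper's, which justifies $5 \nmid {i \choose j}_F$ only by noting ${i \choose j}_F \neq 0$, whereas you correctly cite the observation from the proof of Lemma~\ref{WeightLift} that the only multiples of $5$ among the single-digit fibonomials are the zeros.
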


\begin{proof}
Let $0 \leq k, n < 5^m, 0 \leq j \leq i \leq 4$, and let $(n_i), (k_i)$ be the base 5 expansions of $n$ and $k$, as in Lemma \ref{WeightLift}.

By Lemma \ref{WeightLift}, $5 \mid {n + i5^m \choose k + j5^m}_F \Leftrightarrow 5 \mid {n_0 \choose k_0}_F{n_1 \choose k_1}_F{n_2 \choose k_2}_F\cdots{n_{m-1} \choose k_{m-1}}_F{i \choose j}_F$. Now since $0 \leq j \leq i \leq 4$, we have ${i \choose j}_F \not= 0$ and hence $5 \nmid {i \choose j}_F$.

So $5 \mid {n_0 \choose k_0}_F{n_1 \choose k_1}_F{n_2 \choose k_2}_F\cdots{n_{m-1} \choose k_{m-1}}_F{i \choose j}_F \Leftrightarrow 5 \mid {n_0 \choose k_0}_F{n_1 \choose k_1}_F{n_2 \choose k_2}_F\cdots{n_{m-1} \choose k_{m-1}}_F$.

But then by another direct application of Lemma \ref{WeightLift}, this is true if and only if $5 \mid {n \choose k}_F. \qed$
\end{proof}

\section{Necessary conditions for Conjecture \ref{OpenProb}}

Since Conjecture \ref{OpenProb} holds for the first three primes, it would be reasonable to attempt to show it holds for all primes. For $p = 7$, data suggests that this pattern continues. However, the very next case is a counterexample to this trend. When $p = 11$, we have $11^* = 10$, as noted in the introduction, giving $\mathcal{F}_{11} = (1, 10, 110, 1210, \cdots)$. The fibonomial coefficent ${100 \choose 10}_F$ is not divisible by 11. However, $100 = (0 \ 10)_{\mathcal{F}_{11}}$ and $10 = (0 \ 1)_{\mathcal{F}_{11}}$, so $n_1 = 10$ and $k_1 = 1$, giving $11 \mid {n_1 \choose k_1}_F = {10 \choose 1}_F =55$.

This problem arises because $p^* < p$. We will show that the same problem arises in general when this is the case. Here we will make use of our result from Section 2.

\addtocounter{section}{-2}
\addtocounter{theorem}{5}
\begin{lemma}
\label{valincrestate}
Let $p$ be an odd prime. Then if $\nu_p(F_n) = k > 0$, we have $\nu_p(F_{np}) = k+1$.
\end{lemma}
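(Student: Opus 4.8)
The plan is to adapt the Binet-formula argument that Hoggatt and Bergum use for Theorem~\ref{berhog}, pushing it to track the exact power of $p$ rather than merely a lower bound. First I would apply Binet's formula to write $F_{np} = \frac{1}{\sqrt5}(\alpha^{np} - \beta^{np})$ and factor the difference of $p$-th powers as $\alpha^{np} - \beta^{np} = (\alpha^n - \beta^n)\sum_{i=1}^p \alpha^{n(p-i)}\beta^{n(i-1)}$. Since $\frac{1}{\sqrt5}(\alpha^n - \beta^n) = F_n$, this gives the factorization $F_{np} = F_n \cdot S$ with $S = \sum_{i=1}^p \alpha^{n(p-i)}\beta^{n(i-1)}$. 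Because $\nu_p(F_n) = k$ is given, it then suffices to prove $\nu_p(S) = 1$.

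To evaluate $S$ I would exploit $\alpha\beta = -1$ and the symmetry of the sum, pairing the $i$-th term with the $(p+1-i)$-th term. Each such pair collapses, via $\alpha^r + \beta^r = L_r$, to $(-1)^{n(i-1)} L_{n(p-2i+1)}$, and since $p$ is odd the subscript $n(p-2i+1)$ is even, so the earlier identity $L_{2r} = 5F_r^2 + 2(-1)^r$ applies with $r = n(p-2i+1)/2$. The lone middle term (at $i = (p+1)/2$) equals $(-1)^{n(p-1)/2}$. The arithmetic to check is that every pair contributes the same constant $2(-1)^{n(p-1)/2}$, so that after summing over the $(p-1)/2$ pairs and adding the middle term the constants coalesce into a single term $p(-1)^{n(p-1)/2}$, leaving $S = \sum_{i=1}^{(p-1)/2} (-1)^{n(i-1)} 5 F_{n(p-2i+1)/2}^2 + p(-1)^{n(p-1)/2}$.

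Finally I would read off the valuation. Each Fibonacci number $F_{n(p-2i+1)/2}$ in the sum has a subscript that is a multiple of $n$, so by the corollary $F_n \mid F_m \Leftrightarrow n \mid m$ we have $F_n \mid F_{n(p-2i+1)/2}$ and hence $p^k$ divides it; squaring, each summand is divisible by $p^{2k}$, and since $k \geq 1$ the entire sum is divisible by $p^2$. The remaining term $p(-1)^{n(p-1)/2}$ is divisible by $p$ but not by $p^2$, so $S = p\big((\text{multiple of }p)\pm 1\big)$ carries exactly one factor of $p$. Thus $\nu_p(S) = 1$ and $\nu_p(F_{np}) = k+1$. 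The main obstacle is the bookkeeping in the pairing step: one must verify that the sign exponents $n(i-1) + n(p-2i+1)/2$ all reduce to the single value $n(p-1)/2$, so that the scattered constants genuinely merge into $p(-1)^{n(p-1)/2}$. Without this collapse the constant term need not have valuation exactly $1$, and the contrast between the $p^2$-divisible sum and the $p$-exact remainder — the crux of the whole argument — would break down.
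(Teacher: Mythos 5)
Your proposal follows essentially the same route as the paper's proof of Lemma~\ref{valinc}: Binet's formula, the factorization $F_{np}=F_n S$, pairing the $i$-th and $(p+1-i)$-th terms via $L_{2r}=5F_r^2+2(-1)^r$, collapsing the constants into $p(-1)^{n(p-1)/2}$, and reading off $\nu_p(S)=1$. Your final step is in fact slightly more careful than the paper's, since you observe that each squared Fibonacci summand is divisible by $p^{2k}$ (hence by $p^2$), which cleanly rules out the sum contributing an extra factor of $p$ to the term $p(\cdot\pm1)$ even when $k=1$.
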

\addtocounter{section}{2}
\addtocounter{theorem}{-6}

Since $p \mid F_{p^*}$ by definition of $p^*$, we have the following corollary:

\begin{corollary}
\label{neccor}
Given an odd prime $p$, $\nu_p(F_{p^*p}) = \nu_p(F_{p^*}) + 1$.
\end{corollary}

What we have essentially shown is that multiplication by $p$ in the subscript of a Fibonacci number already divisible by $p$ raises the $p$-valuation by one. Now we must show that if $\gcd(p, p^*) = 1$, then multiplying by $p^*$ in the subscript doesn't change the $p$-valuation, i.e., that $\nu_p(F_{(p^*)^2}) = \nu_p(F_{p^*})$. It is well-known that for all primes $p$ except 5, $p^* \mid p \pm 1$, and hence $\gcd(p, p^*) = 1$.

\begin{lemma}
\label{neclem}
Given an odd prime $p$ not equal to 5, let $\nu_p(F_{p^*}) = k$. Then $\nu_p(F_{(p^*)^2}) = k$.
\end{lemma}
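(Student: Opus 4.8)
The plan is to avoid a direct Binet-style computation and instead pin down $\nu_p(F_{(p^*)^2})$ by comparing $F_{(p^*)^2}$ against the nearby Fibonacci number $F_{p^* p}$, whose $p$-valuation is already controlled by Corollary \ref{neccor}. Write $k = \nu_p(F_{p^*})$, so that Corollary \ref{neccor} gives $\nu_p(F_{p^* p}) = k+1$. The engine of the argument will be the identity $\gcd(F_a, F_b) = F_{\gcd(a,b)}$ recalled above, together with the elementary fact that $\nu_p(\gcd(x,y)) = \min(\nu_p(x), \nu_p(y))$ for positive integers $x,y$.

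First I would compute the index gcd $\gcd\big((p^*)^2,\, p^* p\big)$. Factoring out the common $p^*$ gives $p^* \cdot \gcd(p^*, p)$, and this is exactly where the hypothesis $p \neq 5$ is used: as noted before the statement, $p^* \mid p \pm 1$ for every prime other than $5$, so $\gcd(p^*, p) = 1$ and hence $\gcd\big((p^*)^2,\, p^* p\big) = p^*$. Applying the Fibonacci gcd identity then yields $\gcd\big(F_{(p^*)^2},\, F_{p^* p}\big) = F_{p^*}$.

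Taking $\nu_p$ of both sides and using $\nu_p(\gcd) = \min$, I obtain $\min\big(\nu_p(F_{(p^*)^2}),\, \nu_p(F_{p^* p})\big) = \nu_p(F_{p^*}) = k$. Substituting $\nu_p(F_{p^* p}) = k+1$ turns this into $\min\big(\nu_p(F_{(p^*)^2}),\, k+1\big) = k$. Since $k+1 > k$, the minimum can equal $k$ only if $\nu_p(F_{(p^*)^2}) = k$, which is precisely the claim.

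The main obstacle here is conceptual rather than computational: recognizing that the equality falls out of comparing two indices with a controlled gcd, rather than trying to track the valuation through Binet's formula as in the proof of Lemma \ref{valinc}. A direct Binet/Lucas-pairing route does work — one factors $F_{(p^*)^2} = F_{p^*}\sum_{i=1}^{p^*}\alpha^{p^*(p^*-i)}\beta^{p^*(i-1)}$ and argues the cofactor is congruent to $\pm p^* \pmod p$, hence a $p$-adic unit — but it splits awkwardly into cases according to the parity of $p^*$ (even for $p = 7, 11, \dots$ and odd for $p = 13, 17, \dots$), so the gcd route is both shorter and uniform. The only ingredients that must be in hand are the coprimality $\gcd(p^*,p) = 1$, which is exactly where $p \neq 5$ enters, and Corollary \ref{neccor}.
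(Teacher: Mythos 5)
Your argument is correct and is essentially the paper's own proof: both hinge on computing $\gcd\bigl(F_{(p^*)^2}, F_{p^*p}\bigr) = F_{p^*}$ via the Fibonacci gcd identity, using $p \neq 5$ to get $\gcd(p^*,p)=1$, and then playing this off against $\nu_p(F_{p^*p}) = k+1$ from Corollary \ref{neccor}. The only cosmetic difference is that you package the lower and upper bounds together via $\nu_p(\gcd) = \min$, whereas the paper gets the lower bound separately from $F_{p^*} \mid F_{(p^*)^2}$ and the upper bound from the gcd.
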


\begin{proof}
$F_{p^*} \mid F_{(p^*)^2}$ since $p^* \mid (p^*)^2$, so $\nu_p(F_{(p^*)^2}) \geq k$. But $\gcd(F_{p^*p}, F_{(p^*)^2}) = F_{\gcd(p^*p, (p^*)^2)}$.

Since $p \not = 5$, we have $p^* \mid p \pm 1$ and hence $\gcd(p^*p, (p^*)^2) = p^*$. So $\gcd(F_{p^*p}, F_{(p^*)^2}) = F_{p^*}$. But then since $p^{k+1} \mid F_{p^*p}$ (by Corollary \ref{neccor}), this implies that $p^{k+1} \nmid F_{(p^*)^2}$, which gives $\nu_p(F_{(p^*)^2}) = k$. $\qed$
\end{proof}

Using Lemma \ref{neclem}, we can show that Conjecture \ref{OpenProb} will not hold if $p^* < p$. The pertinent counterexample, as we saw in the $p = 11$ case with ${100 \choose 10}_F$, is ${(p^*)^2 \choose p^*}_F$. In the base $\mathcal{F}_p = (1, p^*, p^* \cdot p, p^* \cdot p^2, \cdots)$, using $p^* < p$, we have $(p^*)^2 = (0 \ p^*)_{\mathcal{F}_p}$ and $p^* = (0 \ 1)_{\mathcal{F}_p}$. Given this scenario, we have the following theorem. Here we will express ${n \choose k}_F$ as the fraction $\frac{F_n\cdots F_{n-k+1}}{k!_F}$:

\begin{theorem}
\label{nectheo}
Let $p$ be a prime such that $p^* < p$. Then $p \mid {0 \choose 0}_F{p^* \choose 1}_F$ but $p \nmid {(p^*)^2 \choose p^*}_F$.
\end{theorem}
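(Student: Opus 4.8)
The plan is to treat the two divisibility claims separately; the first is a one-line computation and the second carries the content. For the first, I would unfold the definitions: since $0!_F = 1$ we have ${0 \choose 0}_F = 1$, while ${p^* \choose 1}_F = \frac{p^*!_F}{1!_F\,(p^*-1)!_F} = F_{p^*}$. By the very definition of $p^*$ we have $p \mid F_{p^*}$, so $p \mid {0 \choose 0}_F{p^* \choose 1}_F$, giving the first half of the statement immediately.

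For the second claim I would pass to the $p$-adic valuation and work with the factored form ${(p^*)^2 \choose p^*}_F = \frac{F_{(p^*)^2}F_{(p^*)^2-1}\cdots F_{(p^*)^2-p^*+1}}{p^*!_F}$, aiming to show that $\nu_p$ of the numerator equals $\nu_p$ of the denominator, so that the quotient has valuation $0$. The tool I would use to pinpoint which Fibonacci factors are divisible by $p$ is the apparition fact $p \mid F_m \Leftrightarrow p^* \mid m$, which follows from the identity $\gcd(F_m,F_{p^*}) = F_{\gcd(m,p^*)}$ quoted in Section~2: if $p \mid F_m$ then, as $p \mid F_{p^*}$ by definition, $p \mid \gcd(F_m,F_{p^*}) = F_{\gcd(m,p^*)}$, and minimality of $p^*$ forces $\gcd(m,p^*)=p^*$, i.e. $p^* \mid m$; the converse is the corollary $F_{p^*} \mid F_m$.

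With this fact I would count valuations in each part. The numerator ranges over the $p^*$ consecutive indices $(p^*)^2-p^*+1, \ldots, (p^*)^2$, among which the only multiple of $p^*$ is $(p^*)^2$ itself (the previous multiple $p^*(p^*-1)$ falls just below the range), so $F_{(p^*)^2}$ is the sole numerator factor divisible by $p$ and $\nu_p(\text{numerator}) = \nu_p(F_{(p^*)^2})$. Similarly $p^*!_F = F_{p^*}F_{p^*-1}\cdots F_1$ has $p^*$ as its only index divisible by $p^*$, so $\nu_p(p^*!_F) = \nu_p(F_{p^*})$. Invoking Lemma~\ref{neclem}, which gives $\nu_p(F_{(p^*)^2}) = \nu_p(F_{p^*})$, the two valuations coincide and cancel, yielding $\nu_p\!\left({(p^*)^2 \choose p^*}_F\right) = 0$, that is, $p \nmid {(p^*)^2 \choose p^*}_F$.

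The only point needing attention is that Lemma~\ref{neclem} excludes $p = 5$; but the hypothesis $p^* < p$ already rules this out, since $5^* = 5 \not< 5$, and more fundamentally it is the condition $p^* < p$ (equivalently $\gcd(p,p^*) = 1$) that makes Lemma~\ref{neclem} applicable and forces the exact cancellation. I do not anticipate a genuine obstacle: the argument is a clean valuation count whose only nonroutine ingredients are Lemma~\ref{neclem} and the apparition fact, so the main care is simply verifying that no factor other than $F_{(p^*)^2}$ in the numerator and $F_{p^*}$ in the denominator contributes to the respective valuations.
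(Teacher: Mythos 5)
Your proposal is correct and follows essentially the same route as the paper: reduce the first product to $F_{p^*}$, then compute $\nu_p$ of ${(p^*)^2 \choose p^*}_F$ by observing via the $\gcd$ identity that only $F_{(p^*)^2}$ in the numerator and $F_{p^*}$ in the denominator carry any factor of $p$, and conclude with Lemma~\ref{neclem}. Your explicit remark that the hypothesis $p^* < p$ automatically excludes $p=5$ (and $p=2$) so that Lemma~\ref{neclem} applies is a point the paper leaves implicit, but it is not a different argument.
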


\begin{proof}
Certainly by definition since ${0 \choose 0}_F{p^* \choose 1}_F = 1 \cdot F_{p^*}$, we have $p \mid {0 \choose 0}_F{p^* \choose 1}_F$.

Now $\nu_p({(p^*)^2 \choose p^*}_F) = \nu_p(\frac{F_{(p^*)^2} \cdots F_{(p^*)^2 - p^* + 1}}{F_{p^*} \cdots F_1}) = \nu_p(F_{(p^*)^2} \cdots F_{(p^*)^2 - p^* + 1}) - \nu_p(F_{p^*} \cdots F_1)$.

But for $n = (p^*)^2 - 1, \cdots, (p^*)^2 - p^* + 1$, we have that $p^* \nmid n$, so $p \nmid F_n$, since $\gcd(F_p^*, F_n) = F_{\gcd(p^*, n)} = F_r$ where $r < p^*$. Similarly, by definition of $p^*, p \nmid F_1, F_2, \cdots, F_{p^* - 1}$.

Thus $\nu_p(F_{(p^*)^2} \cdots F_{(p^*)^2 - p^* + 1}) = \nu_p(F_{(p^*)^2})$ and $\nu_p(F_{p^*} \cdots F_1) = \nu_p(F_{p^*})$.

But by Lemma \ref{neclem}, we have $\nu_p(F_{(p^*)^2}) - \nu_p(F_{p^*}) = 0$. So $\nu_p({(p^*)^2 \choose p^*}_F) = 0$, giving $p \nmid {(p^*)^2 \choose p^*}_F$. $\qed$
\end{proof}

Thus for Conjecture \ref{OpenProb} to hold, we must in general have $p^* \geq p$. It is well-known that the largest value $p^*$ can take is $p + 1$, so our conjecture is only interested in 5 and primes $p$ whose $p^*$ attains this maximum. The list of such $p$ is $(2,3,7, 23,43,67,83, 103, \cdots)$. It is not known whether this list is finite or infinite (see \cite{CubRou14}). We can thus assert the necessary conditions placed on $p$ in the conjecture.

\addtocounter{section}{-3}
\addtocounter{theorem}{-2}
\begin{conjecture}
\label{OpenProbRestate}
Let $p^* \geq p$ and let $\mathcal{F}_p$ be as above. Express $(n)_{\mathcal{F}_p} = (n_i)_{i\geq0}$ and $(k)_{\mathcal{F}_p} = (k_i)_{i\geq0}$. Then

\begin{center}
$p \mid {n \choose k}_F \Leftrightarrow p \mid {n_0 \choose k_0}_F {n_1 \choose k_1}_F {n_2 \choose k_2}_F \cdots$.
\end{center}
\end{conjecture}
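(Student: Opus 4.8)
The plan is to attack the conjecture by generalizing the route used for $p=5$ in Lemma \ref{WeightLift}: reduce both sides of the claimed equivalence to a single statement about carries, and then match those carries across the two bases involved. Throughout I would set $m=k$ and $\ell=n-k$, writing their $\mathcal{F}_p$-digits as $(k_i)$ and $(m_i)$, so that Theorem \ref{KnuWil} applies directly to $\binom{n}{k}_F=\binom{m+\ell}{m}_F$.

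First I would dispose of the right-hand side by proving the \emph{digit fact}: for every $i$,
\[
p \mid \binom{n_i}{k_i}_F \Leftrightarrow n_i < k_i .
\]
The hypothesis $p^*\ge p$ is exactly what makes this true. For $i\ge1$ we have $0\le n_i,k_i<p\le p^*$, and for $i=0$ we have $0\le n_0,k_0<p^*$; in either case every Fibonacci number appearing in the numerator or denominator of $\binom{n_i}{k_i}_F$ has subscript strictly less than $p^*$, hence is coprime to $p$ by the definition of $p^*$. Thus $\nu_p\binom{n_i}{k_i}_F=0$ whenever $n_i\ge k_i$, while $\binom{n_i}{k_i}_F=0$ when $n_i<k_i$. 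Consequently the right-hand side is equivalent to the purely combinatorial condition $\exists i: n_i<k_i$. This is precisely the step that collapses when $p^*<p$: a digit with $p^*\le n_i<p$ can make $\binom{n_i}{k_i}_F$ divisible by $p$ even though $n_i\ge k_i$, which is the mechanism behind the counterexample of Theorem \ref{nectheo}.

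Next I would translate both sides into carry statements. By the mixed-radix analogue of Kummer's theorem, which is established for base $\mathcal{F}_5$ inside Lemma \ref{WeightLift} and applies verbatim to the mixed radix $\mathcal{F}_p$, the condition $\exists i: n_i<k_i$ holds iff the base-$\mathcal{F}_p$ addition of $k$ and $\ell=n-k$ produces a carry out of some digit. On the other side, Theorem \ref{KnuWil} gives $p\mid\binom{n}{k}_F$ iff the $p$-ary addition of $k/p^*$ and $\ell/p^*$ has at least one carry to the left of the radix point or a carry across it, using $\nu_p(F_{p^*})\ge1$ so that a single across-radix carry already forces divisibility. The heart of the argument is to match these counts. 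Writing $k/p^*=k_0/p^*+(k_1\ k_2\ \cdots)_p$ and likewise for $\ell/p^*$, the fractional parts sum to $(k_0+m_0)/p^*$, so a carry crosses the radix exactly when $k_0+m_0\ge p^*$, which is precisely a carry out of digit $0$ in base $\mathcal{F}_p$ (that digit using radix $p^*$). The carries strictly to the left of the radix are then the carries of the base-$p$ addition of the integer parts $(k_1\ k_2\ \cdots)_p$ and $(m_1\ m_2\ \cdots)_p$ with incoming carry $[k_0+m_0\ge p^*]$, i.e.\ exactly the base-$\mathcal{F}_p$ carries out of digits $i\ge1$. Hence both sides are equivalent to the statement that some carry occurs in the base-$\mathcal{F}_p$ addition of $k$ and $n-k$, closing the equivalence.

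The main obstacle I anticipate is rigorously controlling the carries to the \emph{right} of the radix point. Unlike the case $p=5$, where $p^*=p$ makes division by $p^*$ a mere shift so that $k/p^*$ terminates (as exploited in Corollary \ref{FiveVal}), when $p^*=p+1$ the quotients $k/p^*$ and $\ell/p^*$ have eventually periodic $p$-ary expansions, and adding two such expansions involves infinitely many rightward carries. I must verify that these do not corrupt the left-of-radix count: the safe claim is that the across-radix carry equals $\lfloor (k_0+m_0)/p^*\rfloor\in\{0,1\}$ independent of the periodicity, and that the number of left carries depends only on the integer-part addition together with this single across-radix carry, so the rightward carries are internal and harmless. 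Making this airtight within the precise carry-counting conventions of Theorem \ref{KnuWil}, and confirming it against worked instances such as the $p=7$ computation of $\binom{57}{26}_F$, is where the real work lies, and it is the step on which a full proof for primes with $p^*=p+1$ ultimately rests.
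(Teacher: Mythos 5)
Be aware that the paper does not prove this statement: it is presented as a conjecture, established only for $p=2,3$ (by Chen and Sagan), for $p=5$ (Lemma \ref{WeightLift}), shown to require $p^*\ge p$ (Theorem \ref{nectheo}), and otherwise supported only by computations for $p=7,23,43,67,83$. So there is no proof in the paper to compare yours against; what you have written is a plan for settling the open case $p^*=p+1$. Your plan is the natural generalization of the paper's own $p=5$ argument, and its three components all check out: the digit fact holds because every Fibonacci number occurring in ${n_i \choose k_i}_F$ has subscript at most $n_i<p^*$ (this is exactly where $p^*\ge p$ enters, as you note); the equivalence of $\exists i:\,n_i<k_i$ with the occurrence of a carry in the mixed-radix addition of $k$ and $n-k$ is the same Kummer-style bookkeeping as in the proof of Lemma \ref{WeightLift}; and the identification of the across-radix carry with a carry out of digit $0$ (radix $p^*$) and of the left-of-radix carries with carries out of digits $i\ge1$ (radix $p$) is forced by $k/p^* = k_0/p^* + (k_1+k_2p+\cdots)$.

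The point you flag as the remaining work is indeed the crux, and it is worth being concrete about why it is not automatic. When $p^*=p+1$ the fractional parts $k_0/p^*$ and $m_0/p^*$ have purely periodic $p$-ary expansions, and in the boundary case $k_0+m_0=p^*$ their digitwise sum is $0.\overline{(p-1)(p-1)\cdots}$, in which no carry is visible at any finite position even though the true sum of the fractional parts is exactly $1$. For example, with $p=7$ and $p^*=8$ one has $3/8=0.\overline{24}_7$ and $5/8=0.\overline{42}_7$, which add digitwise to $0.\overline{66}_7$ with no carries anywhere; yet ${8 \choose 3}_F = 1092 = 2^2\cdot3\cdot7\cdot13$ has $\nu_7=1$, so Theorem \ref{KnuWil} must be read as registering a carry across the radix point in this situation. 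Your ``safe claim'' that the across-radix carry equals $\lfloor(k_0+m_0)/p^*\rfloor$ is therefore the correct convention, but it has to be extracted from the definitions in \cite{KnuWil89} rather than from naive digitwise addition of the periodic expansions; the issue is invisible in Corollary \ref{FiveVal} precisely because the expansions terminate when $p^*=p$. Once that convention is pinned down, your argument closes the conjecture for all $p$ with $p^*\ge p$ (this is essentially the general Kummer rule of \cite{Bal15}, which the paper invokes only for $p=5$), so you would be proving strictly more than the paper does.
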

\addtocounter{section}{3}
\addtocounter{theorem}{2}

Chen and Sagan have shown that Conjecture \ref{OpenProb} holds for $p = 2, 3$, and our Lemma~\ref{WeightLift} handles $p=5$. Preliminary computations show that Conjecture \ref{OpenProb} holds for the first 500 rows of the fibonomial triangle for $p = 7, 23, 43, 67$, and 83. We now discuss further directions of research related to the Fibonomial triangle.

\section{Further Directions}

\subsection{The exact residue class of ${n \choose k}_F$ mod 5}

Extending the divisibility results of Corollary \ref{MainCorollary} to exact residue classes for all ${n \choose k}_F$ will not be possible using only Theorem \ref{KnuWil}, since that theorem relates to divisibility of ${n \choose k}_F$ by a prime $p$, not the exact residue class. However, one can derive simple relations using combinatorial and other number theoretic methods. Take for example the following lemma, which makes use of the fact mentioned in the introduction, that $F_n$ counts the number of tilings of a row of $n-1$ squares by dominos and monominos:

\begin{lemma}
$F_{n+5} \equiv_5 3 \cdot F_n$
\end{lemma}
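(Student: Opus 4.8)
The plan is to prove the congruence combinatorially, using the stated fact that $F_m$ counts the tilings of a row of $m-1$ squares by dominos and monominos. First I would interpret the left-hand side: $F_{n+5}$ is the number of tilings of a board of $n+4$ cells. I would then fix a single boundary inside the board and split the count according to how a tiling behaves there, which is exactly the ``breakability'' dichotomy behind the standard combinatorial Fibonacci identities.

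Concretely, I would examine the boundary between cell $n$ and cell $n+1$. Every tiling of the length-$(n+4)$ board falls into exactly one of two classes. In the first, no tile crosses this boundary, so the tiling splits into an independent tiling of the first $n$ cells and an independent tiling of the last $4$ cells; these are counted by $F_{n+1}$ and $F_5$ respectively. In the second, a domino straddles the boundary, covering cells $n$ and $n+1$; what remains is a free tiling of the first $n-1$ cells together with a free tiling of the last $3$ cells, counted by $F_n$ and $F_4$. Summing the two classes gives the identity $F_{n+5} = F_5\,F_{n+1} + F_4\,F_n$.

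To finish, I would reduce this identity modulo $5$. Since $F_5 = 5 \equiv_5 0$ and $F_4 = 3$, the first term vanishes and the second becomes $3F_n$, yielding $F_{n+5} \equiv_5 3F_n$ as claimed. The computation is short, so I do not expect a genuine obstacle; the only points needing care are the index bookkeeping in the tiling interpretation (a length-$\ell$ board is tiled in $F_{\ell+1}$ ways) and the small-$n$ edge cases, e.g.\ for $n=1$ the left board in the domino case is empty, which is correctly counted by $F_1 = 1$, the empty tiling. As alternatives one could bypass the combinatorics entirely and invoke the addition identity $F_{a+b} = F_{a+1}F_b + F_a F_{b-1}$ with $a = 5$, or simply verify the claim by induction on $n$ using $F_{n+5} = F_{n+4} + F_{n+3}$ after checking two base cases; but the tiling argument is the most transparent and matches the interpretation highlighted in the statement.
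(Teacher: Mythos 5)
Your proposal is correct and follows essentially the same route as the paper: both split the tilings of the length-$(n+4)$ board at the boundary after cell $n$ according to whether a domino crosses it, obtain $F_{n+5} = F_5 F_{n+1} + F_4 F_n$, and reduce modulo $5$ using $F_5 = 5$ and $F_4 = 3$. No gaps.
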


\begin{proof}
$F_{n + 5}$ counts the number of ways to tile $n + 4$ squares with dominos and monominos. These tilings can be broken up into two cases. Number the lines between squares as $1, 2, ..., n+3$. Mark line $n$. Let the first type of tiling have a domino crossing this line and the second type not have a domino crossing the line. Certainly these cases are disjoint and each tiling falls under one of the cases. The first type is the number of ways to tile the first $n-1$ tiles for each tiling of the last 3 tiles ($F_n \cdot F_4$). The second type is the number of ways to tile the first $n$ tiles for each tiling of the last 4 tiles ($F_{n+1} \cdot F_5$). This gives the relation $F_{n + 5} = F_4F_n + F_5F_{n + 1}$. Replacing the known values, we arrive at the desired result: $F_{n + 5} = 3F_n + 5F_{n + 1} \equiv_5 3F_n$. $\qed$
\end{proof}

This lemma gives immediate rise to the following corollary, which expresses certain fibonomial coefficients in terms of the coefficients five lines higher in the fibonomial triangle:

\begin{corollary}
For $k = 0, 1, 2, 3, 4$, we have ${n + 5 \choose k}_F \equiv_5 3^k{n \choose k}_F$.
\end{corollary}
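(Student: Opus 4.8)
The plan is to expand both fibonomial coefficients over the common denominator $k!_F$ and then apply the preceding lemma factor by factor in the numerators. Writing ${n+5 \choose k}_F = \frac{F_{n+5}F_{n+4}\cdots F_{n-k+6}}{k!_F}$ and ${n \choose k}_F = \frac{F_n F_{n-1}\cdots F_{n-k+1}}{k!_F}$, I observe that each numerator is a product of exactly $k$ Fibonacci numbers, and that the two products line up term by term: the factor $F_{n+5-j}$ of the first numerator sits opposite the factor $F_{n-j}$ of the second, for $j = 0, 1, \dots, k-1$.

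First I would establish the numerator congruence. Since $F_{n+5-j} = F_{(n-j)+5} \equiv_5 3\, F_{n-j}$ by the preceding lemma for each such $j$, multiplying these $k$ congruences together gives $F_{n+5}F_{n+4}\cdots F_{n-k+6} \equiv_5 3^k\, F_n F_{n-1}\cdots F_{n-k+1}$. Clearing the common denominator, this is exactly the integer congruence ${n+5 \choose k}_F \cdot k!_F \equiv_5 3^k {n \choose k}_F \cdot k!_F$, where every quantity appearing is an integer, since both fibonomials are integers, as noted after their definition.

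The one point that actually needs care -- and the reason the statement is restricted to $k \in \{0,1,2,3,4\}$ -- is cancelling $k!_F$ from both sides modulo 5. This is legitimate precisely because $k \le 4 < 5 = 5^*$: the factors of $k!_F = F_k F_{k-1}\cdots F_1$ are drawn from $F_1, \dots, F_4$, and by the definition of $5^*$ none of these is divisible by 5, so $\gcd(k!_F, 5) = 1$ and $k!_F$ is a unit in $\mathbb{Z}/5\mathbb{Z}$. Multiplying the displayed integer congruence by the inverse of $k!_F$ modulo 5 then yields ${n+5 \choose k}_F \equiv_5 3^k {n \choose k}_F$, as desired (the case $k = 0$ being the trivial identity $1 \equiv_5 1$). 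I do not anticipate a serious obstacle: the whole argument is a term-by-term application of the lemma together with the coprimality of $k!_F$ to 5. It is worth remarking that for $k \ge 5$ the factor $F_5$ would enter the denominator, the cancellation step would fail, and one would have to account for the extra power of 5 -- which is exactly consistent with the stated range of $k$.
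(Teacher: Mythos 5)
Your proposal is correct and follows essentially the same route as the paper: apply the lemma $F_{m+5}\equiv_5 3F_m$ to each of the $k$ numerator factors and use the invertibility of $k!_F$ modulo 5 (which holds because $k\leq 4 < 5^*$) to divide out the common denominator. Your remark on why the restriction $k\leq 4$ is needed is a nice touch but does not change the argument.
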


\begin{proof}
Let $k$ be as above. For $k = 0$ we have $1 \equiv_5 {n + 5 \choose 0} \equiv_5 3^0{n \choose 0} \equiv_5 1$. For the other cases, observe that $k!_F$ has no factors of 5 and thus is invertible (mod 5). Then

\begin{center}
${n + 5 \choose k}_F \equiv_5 \frac{F_{n+5}...F_{n+5-k+1}}{k!_F} \equiv_5 [k!_F]^{-1} \cdot F_{n+5}...F_{n+5-k+1} \equiv_5$
\end{center}

\begin{center}
$[k!_F]^{-1} \cdot 3F_{n}...3F_{n-k+1} \equiv_5 3^k\frac{F_{n}...F_{n-k+1}}{k!_F} \equiv_5 3^k{n \choose k}_F$
\end{center}

and the congruence is shown. $\qed$
\end{proof}

Perhaps similar methods can be used to show a congruence for all ${n \choose k}_F$.

\subsection{When $p^* < p$}

As shown in Section 4, one cannot use the base $\mathcal{F}_p$ to derive a conjecture similar to Conjecture \ref{OpenProb} for primes where $p^* < p$. Another direction to research is to determine whether there is a more appropriate base to choose when this is the case. This seems unlikely, since $\mathcal{F}_p$ is constructed so that for each term in the base, the Fibonacci number with that subscript has a greater $p$-valuation than the Fibonacci number with the previous term as its subscript, and hence seems like the ``correct" base to use when studying the Fibonacci numbers mod $p$. However, perhaps there is a different principle to use in constructing a base when $p^* < p$ which will avoid the problems arising from $\mathcal{F}_p$.

%

\end{document}